\theoremstyle{plain} 
\newtheorem{theorem}{Theorem}[section]
\newtheorem{lemma}[theorem]{Lemma}
\newtheorem{remark}[theorem]{Remark}
\theoremstyle{definition} %
\theoremstyle{remark} %
\newcounter{numctr}
\DeclareMathOperator*{\argmin}{arg\,min}
\newcommand{\julian}[1]{{{\textcolor{green}{#1}}}}
\newcommand{\maxs}[1]{{{\textcolor{blue}{#1}}}}
\begin{document}

\title{Neural network guided adjoint computations in dual weighted residual error estimation}
\author[1]{J. Roth}
\author[1]{M. Schr\"oder}
\author[1,2]{T. Wick}

\affil[1]{Leibniz Universit\"at Hannover, Institut f\"ur Angewandte
  Mathematik, AG Wissenschaftliches Rechnen, Welfengarten 1, 30167 Hannover, Germany}

\affil[2]{Cluster of Excellence PhoenixD (Photonics, Optics, and
	Engineering - Innovation Across Disciplines), Leibniz Universit\"at Hannover, Germany}

\date{}

\maketitle
	
\begin{abstract}
In this work, we are concerned with neural network guided goal-oriented a posteriori error estimation and 
adaptivity using the dual weighted residual method. 
The primal problem is solved 
using classical Galerkin finite elements. The adjoint problem is solved 
in strong form with a feedforward neural network using two or three hidden layers. 
The main objective of our approach is to explore alternatives for solving the adjoint problem 
with greater potential of a numerical cost reduction.
The proposed algorithm is based on the general goal-oriented error estimation 
theorem including both linear and nonlinear stationary partial differential equations and goal functionals. 
Our developments are substantiated with some numerical experiments that include comparisons 
of neural network computed adjoints and classical finite element solutions of the adjoints. 
In the programming software, the open-source library deal.II is successfully coupled 
with LibTorch, the PyTorch C++ application programming interface.
\end{abstract}

\section{Introduction}
This work is devoted to an innovative solution of the adjoint equation in goal-oriented 
error estimation with the dual weighted residual (DWR) method \cite{BeRa96,becker_rannacher_2001,bangerth_rannacher_2003} (based on former adjoint concepts \cite{ErikssonEstepHansboJohnson1995}); we also refer to \cite{braack_ern_2003,AinsworthOden:2000,GilesSuli2002,PerairePatera1998}
for some important early work. Since then, the DWR method has been applied to numerous applications such as fluid-structure interaction \cite{ZeeBrummelenAkkermanBorst2011,Ri12_dwr,FaiWi18},
Maxwell's equations \cite{CiBo20_maxwell}, surrogate models in stochastic inversion \cite{MaWo18},
model adaptivity in multiscale problems \cite{MaiRa18},
and adaptive multiscale predictive modeling \cite{Od18}. A summary of theoretical advancements 
in efficiency estimates and multi-goal-oriented error estimation was recently made in \cite{Endt21}.
An important part in these studies is the adjoint 
problem, as it measures the sensitivity of the primal 
solution with respect to a single or multiple given goal functionals (quantities of interest). This adjoint solution 
is usually obtained by global higher order finite element method (FEM) solutions or local higher order approximations
\cite{becker_rannacher_2001}. In general, the former is more stable, see e.g. \cite{EndtLaWi20_smart}, but 
the latter works often sufficiently well in practice. 
As the adjoint solution is only required to evaluate 
the a posteriori error estimator, a cheap solution is of interest. 

Consequently, in this work, the main objective is to explore alternatives 
for computing the adjoint.
Due to the universal approximation property \cite{pinkus_1999}, a primer candidate are neural networks as they are already successfully employed for solving 
partial differential equations (PDE) \cite{raissi2017pinns,sirignano2017dgm,Berg_2018,raissi_hiddenfluid,li2020fourier, WESSELS2020,hartmann2020neural,SAMANIEGO2020112790,hennigh2020nvidia}. 
A related work in aiming to improve goal-oriented computations with the help of neural network data-driven
finite elements is \cite{BREVIS2020}.
Moreover, a recent summary of the key concepts of neural networks and deep learning was compiled in \cite{HiHi19}.
The advantage of neural networks is a greater flexibility as they belong to the class of meshless methods. We follow the methodology of \cite{raissi2017pinns, sirignano2017dgm} to solve PDEs by minimizing the residual using 
an L-BFGS (Limited memory Broyden-Fletcher-Goldfarb-Shanno) method \cite{Liu1989}.
We address both linear and nonlinear PDEs and goal functionals in stationary settings.
However, a shortcoming in the current approach is that we need to work with strong adjoint formulations,
which may limit extensions to nonlinear coupled PDEs such as multiphysics problems and coupled variational 
inequality systems. If such problems can be restated in an energy formulation, again neural network algorithms
are known \cite{E2018, SAMANIEGO2020112790}. Despite this drawback, namely the necessity of working with strong formulations, 
the current study provides useful insights whether 
at all neural network guided adjoints can be an alternative concept for dual weighted residual error estimation. 
For this reason, our resulting modified adaptive algorithm and related numerical simulations
are compared side by side in all numerical tests to classical Galerkin finite element solutions (see e.g., \cite{ciarlet2002fem}) of the adjoint.
Our proposed algorithm is implemented in the open-source finite element library 
deal.II \cite{dealII92} coupled with LibTorch, the PyTorch C++ API \cite{paszke2019pytorch}.

The outline of this paper is as follows: In Section \ref{sec:DWR}, we recapitulate the DWR method.
Next, in Section \ref{sec:NN} we gather the important ingredients of the neural network solution. This section
also includes an extension of an approximation theorem from Lebesgue spaces to classical function spaces.
The algorithmic realization is addressed in Section \ref{sec:algo}. Then, in Section \ref{sec:tests}
several numerical experiments are conducted. Our findings are summarized in Section \ref{sec:conclusions}.

\section{Dual weighted residual method}
\label{sec:DWR}

\subsection{Abstract problem}\label{sec:abstract_problem}
Let $U$ and $V$ be Banach spaces and let $\mathcal{A}: U \rightarrow V^\ast$ be a nonlinear mapping, where $V^\ast$ denotes the dual space of $V$. With this, we can define the problem: Find $u \in U$ such that 
\begin{align}\refstepcounter{numctr}\label{primalproblem}
    \mathcal{A}(u)(v) = 0 \quad \forall v \in V. \tag{\thenumctr}
\end{align}
Additionally, we can look at an approximation of this problem. For subspaces $\tilde{U} \subset U$ and $\tilde{V} \subset V$ the problem reads: Find $\tilde{u} \in \tilde{U}$ such that
\begin{align*}
    \mathcal{A}(\tilde{u})(\tilde{v}) = 0 \quad \forall \tilde{v} \in \tilde{V}.
\end{align*}
 \begin{remark}
      In the following the nonlinear mapping $\mathcal{A}(\cdot)(\cdot)$ will represent the variational formulation of a stationary partial differential equation with the associated function spaces $U$ and $V$. We define the finite element approximation of the abstract problem as follows: Find $u_h \in U_h$ such that
\begin{align}\refstepcounter{numctr}\label{discrete_primalproblem}
    \mathcal{A}(u_h)(v_h) = 0 \quad \forall v_h \in V_h, \tag{\thenumctr}
\end{align}
where $U_h \subset U$ and $V_h \subset V$ denote the finite element spaces. Here the operator is given by $\mathcal{A}(u_h)(\cdot) := a(u_h)(\cdot) - l(\cdot)$ with the linear forms $a(u_h)(\cdot)$ and $l(\cdot)$.
 \end{remark}

\subsection{Motivation for adaptivity}   

In many applications we are not necessarily interested in the whole solution to a given problem but more explicitly only in the evaluation of a certain quantity of interest. This quantity of interest can often be represented mathematically by a goal functional $J: U \rightarrow \mathbb{R}$. Here the main target is to minimize the error in this given goal functional and use the computational resources efficiently. This can lead to the approach of \cite{BeRa96, becker_rannacher_2001}, the DWR method, which this work will follow closely. We are interested in the evaluation of the goal functional $J$ in the solution $u \in U$ to the problem $\mathcal{A}(u)(v) = 0$ for all $v \in V$. Under the assumption that the problem yields a unique solution, the formulation from above can be rewritten into the equivalent optimization problem
\begin{align*}
    \min_{u \in U} J(u) \quad s.t. \quad \mathcal{A}(u)(v) = 0 \ \forall v \in V.
\end{align*}
For this constrained optimization problem we can introduce the corresponding Lagrangian 
\begin{align*}
    \mathcal{L}(u,z) = J(u) - \mathcal{A}(u)(z)
\end{align*}
with the adjoint variable $z \in V$. For this, a stationary point needs to fulfill the first-order necessary conditions
\begin{align*}
    \mathcal{L}^\prime = 0 &\Leftrightarrow 
    \begin{cases}
      \mathcal{L}_u^\prime(u,z) = J^\prime(u)(\delta u) - \mathcal{A}^\prime(u)(\delta u,z) &\stackrel{!}{=}0\\
      \mathcal{L}_z^\prime(u,z) = -\mathcal{A}(u)(\delta z) &\stackrel{!}{=}0
    \end{cases} \\
    &\Leftrightarrow
    \begin{cases}
      \mathcal{A}^\prime(u)(\delta u,z) = J^\prime(u)(\delta u)\\
      \mathcal{A}(u)(\delta z) = 0
    \end{cases}
\end{align*}
where $J^\prime , \mathcal{A}^\prime$ denote the Fr\'echet derivatives. We see that a defining equation for the adjoint variable arises therein. Find $z \in V$ such that
\begin{align}\refstepcounter{numctr}\label{adjointproblem}
    \mathcal{A}^\prime(u)(\phi,z) = J^\prime(u)(\phi) \quad \forall \phi \in U, \tag{\thenumctr}
\end{align}
which is known as the adjoint problem. This leads to the error representation for arbitrary approximations, as derived in \cite{rannachervihharev_balancing}.
\begin{theorem} \label{erroestimator}
Let $(u,z) \in U \times V$ solve (\ref{primalproblem}) and (\ref{adjointproblem}). Further, let $\mathcal{A} \in \mathcal{C}^3(U,V^\ast) $ and $J \in \mathcal{C}^3(U,\mathbb{R})$.
Then for arbitrary approximations $(\Tilde{u},\Tilde{z}) \in U \times V$ the error representation
\begin{align*}\refstepcounter{numctr}
    J(u)-J(\Tilde{u}) = \frac{1}{2}\rho (\Tilde{u})(z-\Tilde{z})+\frac{1}{2}\rho^*(\Tilde{u},\Tilde{z})(u-\Tilde{u}) + \rho (\Tilde{u})(\Tilde{z}) +  \mathcal{R}^{(3)} \tag{\thenumctr}
\end{align*}
holds true and 
\begin{align*}
    \rho (\Tilde{u})(\cdot) &\coloneqq -\mathcal{A}(\Tilde{u})(\cdot), \\ 
    \rho^*(\Tilde{u},\Tilde{z})(\cdot) &\coloneqq J^{\prime}(\Tilde{u})(\cdot) - \mathcal{A}^{\prime}(\Tilde{u})(\cdot,\Tilde{z}).
\end{align*}
With $e = u-\Tilde{u}, \ e^* = z - \Tilde{z}$, the remainder term reads as follows:
\begin{align*}
    \mathcal{R}^{(3)} \coloneqq \frac{1}{2}\int_0^1 \Big[J^{\prime \prime \prime}(\Tilde{u}+se)(e,e,e) - \mathcal{A}^{\prime \prime \prime}(\Tilde{u}+se)(e,e,e,\Tilde{z}+se^*) \nonumber\\ -3\mathcal{A}^{\prime \prime}(\Tilde{u}+se)(e,e,e^*)\Big]s(s-1) \ \mathrm{d}s.
\end{align*}
\end{theorem}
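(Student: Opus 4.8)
The plan is to exploit the Lagrangian $\mathcal{L}(u,z) = J(u) - \mathcal{A}(u)(z)$ introduced above and to recognise that the exact pair $(u,z)$ is a stationary point of $\mathcal{L}$: the primal equation (\ref{primalproblem}) and the adjoint equation (\ref{adjointproblem}) together state precisely that $\mathcal{L}'(u,z) = 0$. First I would record the two elementary identities $\mathcal{L}(u,z) = J(u)$, which holds because $\mathcal{A}(u)(z) = 0$, and $\mathcal{L}(\tilde u,\tilde z) = J(\tilde u) - \mathcal{A}(\tilde u)(\tilde z) = J(\tilde u) + \rho(\tilde u)(\tilde z)$. Subtracting, the goal-functional error splits as
\[
J(u) - J(\tilde u) = \mathcal{L}(u,z) - \mathcal{L}(\tilde u,\tilde z) + \rho(\tilde u)(\tilde z),
\]
so that the isolated residual term $\rho(\tilde u)(\tilde z)$ already appears, and it only remains to expand the Lagrangian difference $\mathcal{L}(u,z) - \mathcal{L}(\tilde u,\tilde z)$.

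For this I would pass to the scalar function $g(s) := \mathcal{L}\big((\tilde u,\tilde z) + s(e,e^*)\big)$ on $[0,1]$, so that $\mathcal{L}(u,z) - \mathcal{L}(\tilde u,\tilde z) = g(1) - g(0) = \int_0^1 g'(s)\,\mathrm{d}s$. The key tool is the trapezoidal quadrature rule with exact integral remainder, applied to $g'$,
\[
\int_0^1 g'(s)\,\mathrm{d}s = \tfrac{1}{2}\big(g'(0) + g'(1)\big) + \tfrac{1}{2}\int_0^1 g'''(s)\,s(s-1)\,\mathrm{d}s,
\]
which follows from integrating by parts twice against the weight $s(s-1)$ (the Peano kernel, whose vanishing at the endpoints kills the boundary contributions). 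Since $(u,z)$ is stationary we have $g'(1) = \mathcal{L}'(u,z)(e,e^*) = 0$, whereas the chain rule gives $g'(0) = \mathcal{L}'(\tilde u,\tilde z)(e,e^*) = \mathcal{L}_u'(\tilde u,\tilde z)(e) + \mathcal{L}_z'(\tilde u,\tilde z)(e^*)$. Computing the two partial derivatives directly from $\mathcal{L} = J - \mathcal{A}$ yields exactly $\mathcal{L}_u'(\tilde u,\tilde z)(e) = \rho^*(\tilde u,\tilde z)(u-\tilde u)$ and $\mathcal{L}_z'(\tilde u,\tilde z)(e^*) = \rho(\tilde u)(z - \tilde z)$, which reproduces the two weighted-residual terms together with their prefactor $\tfrac{1}{2}$.

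The remaining and most delicate step is to identify the quadrature remainder $\tfrac{1}{2}\int_0^1 g'''(s)\,s(s-1)\,\mathrm{d}s$ with $\mathcal{R}^{(3)}$. This requires differentiating $g(s) = J(u_s) - \mathcal{A}(u_s)(z_s)$, with $u_s = \tilde u + se$ and $z_s = \tilde z + se^*$, three times in $s$; the $\mathcal{C}^3$-regularity of $J$ and $\mathcal{A}$ is precisely what justifies this and legitimises differentiation under the integral sign. The bookkeeping is the main obstacle: each $s$-differentiation acts both on the base point $u_s$ (producing higher Fréchet derivatives in the direction $e$) and on the test-function slot $z_s$ (producing the direction $e^*$), and one must keep the differentiation directions cleanly separated from the test argument. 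Carrying this out gives
\[
g'''(s) = J'''(u_s)(e,e,e) - \mathcal{A}'''(u_s)(e,e,e,z_s) - 3\,\mathcal{A}''(u_s)(e,e,e^*),
\]
where the factor $3$ counts the $\binom{3}{1}=3$ ways of assigning one of the three $s$-derivatives to the test slot $z_s$ (yielding the single $e^*$) while the other two hit the base point $u_s$ (yielding the $\mathcal{A}''$ term). Substituting $u_s$ and $z_s$ and inserting into the remainder integral then produces precisely the stated $\mathcal{R}^{(3)}$, which completes the argument.
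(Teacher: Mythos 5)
Your proof is correct and is essentially \emph{the} proof of this result: the paper itself gives no argument for Theorem \ref{erroestimator} but defers entirely to \cite{rannachervihharev_balancing}, and the proof found there rests on exactly the ingredients you use --- the Lagrangian $\mathcal{L}=J-\mathcal{A}$, stationarity of the exact pair $(u,z)$ so that $g'(1)=0$, the trapezoidal rule with Peano-kernel remainder $\tfrac{1}{2}\int_0^1 g'''(s)\,s(s-1)\,\mathrm{d}s$, and the identification of $g'(0)$ with $\rho(\tilde u)(z-\tilde z)+\rho^*(\tilde u,\tilde z)(u-\tilde u)$ plus the separate bookkeeping of the iteration error $\rho(\tilde u)(\tilde z)$. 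Your computation of $g'''(s)$, including the combinatorial factor $3$ on $\mathcal{A}''(u_s)(e,e,e^*)$ coming from the linearity of the test slot, checks out, so the argument is complete.
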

\begin{proof}
The proof can be found in \cite{rannachervihharev_balancing}.
\end{proof}
\begin{remark}
If $\tilde{u} := u_h \in U_h \subset U$ is the Galerkin projection which solves (\ref{discrete_primalproblem}) and $\tilde{z} := z_h \in V_h \subset V $, then the iteration error $\rho ( \tilde{u}) (\tilde{z})$ vanishes and yields the theorems presented in the early work \cite{bangerth_rannacher_2003}.
Therefore, from now on we omit the iteration error.  The remainder term is usually of third order \cite{becker_rannacher_2001} and can be omitted for which detailed computational evidence was 
demonstrated in \cite{endtmayer2020reliability}.
In the case of a linear problem, it clearly holds that 
\begin{align*}
    \eta = \rho (\Tilde{u})(z-\Tilde{z}) = \frac{1}{2}\rho (\Tilde{u})(z-\Tilde{z})+\frac{1}{2}\rho^*(\Tilde{u},\Tilde{z})(u-\Tilde{u}).
\end{align*}
\end{remark}

\begin{remark}
     Theorem \ref{erroestimator} motivates the error estimator
     \begin{align*}
         \eta = \frac{1}{2}\rho (\Tilde{u})(z-\Tilde{z})+\frac{1}{2}\rho^*(\Tilde{u},\Tilde{z})(u-\Tilde{u}).
     \end{align*}
     This error estimator is exact but not computable. Therefore, the exact solutions $u$ and $z$ are now being approximated by higher-order solutions $\left(u_h^{(2)},z_h^{(2)}\right) \in U_h^{(2)} \times V_h^{(2)}$. These higher-order solutions can be realised by a globally refined grid or by using higher-order basis functions. The practical error estimator reads
     \begin{align*}\refstepcounter{numctr}\label{comp_erroresti}
         \eta^{(2)}= \frac{1}{2}\rho (\Tilde{u})\left(z_h^{(2)}-\Tilde{z}\right)+\frac{1}{2}\rho^*(\Tilde{u},\Tilde{z})\left(u_h^{(2)}-\Tilde{u}\right). \tag{\thenumctr}
     \end{align*}
\end{remark}

\subsection{DWR Algorithm}
\noindent In principle, we need to solve four problems, where especially the computation of  $u_h^{(2)}$ is expensive. 
It is well-known that different possibilities exist such as global higher-order finite element solution 
or local interpolations \cite{becker_rannacher_2001,RiWi15_dwr,braack_ern_2003}.
Moreover, we only consider the primal part of the error estimator, which is justified 
for linear problems only, and yields a second order remainder term in nonlinear problems \cite{becker_rannacher_2001}[Proposition 2.3]:
\begin{align*}
    \eta_h^{(2)} = \rho(u_h)\left(z_h^{(2)}-\tilde{z}\right).
\end{align*}
For many nonlinear problems this version is used as it reduces to solving only two problems
and yields for mildly nonlinear problems, such as incompressible flow \cite{BrRi06}, excellent values.
On the other hand, for quasi-linear problems, there is a strong need to work with the adjoint 
error parts $\rho^*$ as well \cite{EndtLaWi18,endtmayer2020reliability}.

In our work, we employ solutions in enriched spaces. We compute the adjoint solution $z_h^l = i_h z_h^{l,(2)} \in V_h^l \subset V_h^{l,(2)}$ via restriction. For nonlinear problems, we approximate the primal solution in the enriched space $u_h^{l,(2)} = I_h^{(2)}u_h^l \in U_h^{l,(2)} \supset U_h^l$ via interpolation. Therefore, we only solve two problems in practice: the primal problem and the enriched adjoint problem.

\begin{algorithm}[H]
\caption{DWR algorithm for general nonlinear problems} \label{dwr_algo}
\begin{algorithmic}[1]
\State Start with some initial guess $u_h^0, l = 1$.
\State Solve the primal problem: Find $u_h^l \in U_h^l$ such that 
\begin{align*}
    \mathcal{A}\left(u_h^l\right)\left(\phi_h^l\right) = 0 \quad \forall \phi_h^l \in V_h^l ,
\end{align*}
using some nonlinear solver.
\State Compute the interpolations $u_h^{l,(2)} = I_h^{(2)}u_h^l \in U_h^{l,(2)}$.
\State Solve the enriched adjoint problem: Find $z_h^{l,(2)} \in V_h^{l,(2)}$ such that
\begin{align*}
    \mathcal{A}'\left(u_h^{l,(2)}\right)\left(z_h^{l,(2)},\psi_h^{l,(2)}\right) = J'\left(u_h^{l,(2)}\right)\left(\psi_h^{l,(2)}\right) \quad \forall \psi_h^{l,(2)} \in U_h^{l,(2)},
\end{align*}
using some linear solver.
\State Compute the restriction $z_h^l = i_h z_h^{l,(2)} \in V_h^l$.
\State Compute the error estimator $\eta^{(2)}$.

\If{$\left| \eta^{(2)} \right| < TOL$}
    \State Algorithm terminates with final output $J\left(u_h^l\right)$.
\EndIf
\State Localize error estimator $\eta^{(2)}$ and mark elements.
\State Refine marked elements: $\mathbb{T}_h^l \mapsto \mathbb{T}_h^{l+1}, l = l+1.$
\State Go to Step 2.  
\end{algorithmic}
\end{algorithm}

\subsection{Error localization}
The error estimator $\eta^{(2)}$ must be localized to corresponding regions of error contribution. This can be either done by methods proposed in \cite{BeRa96,becker_rannacher_2001,bangerth_rannacher_2003}, which use integration by parts in a backwards manner and result in an element wise localization employing the strong 
form of the equations.
However, in this work we use the technique of \cite{RiWi15_dwr}, where a partition-of-unity (PU) $\sum_i \psi_i \equiv 1$ was introduced, in which the error contribution is localized on a nodal level. To realize this partition-of-unity, one can simply choose piece-wise bilinear elements $\lbrace \psi_h^i\, |\, i = 1,\dots,N \rbrace$. Then, the approximated error indicator reads
\begin{align*} \refstepcounter{numctr}\label{pu_dwr_estimator}
    \eta^{(2), PU} = \sum_{i=1}^{N} \left( \frac{1}{2}\rho (\Tilde{u})\left(\left(z_h^{(2)}-\Tilde{z}\right)\psi_i\right)+\frac{1}{2}\rho^*(\Tilde{u},\Tilde{z})\left(\left(u_h^{(2)}-\Tilde{u}\right)\psi_i\right) \right). \tag{\thenumctr}
\end{align*}
Some recent theoretical work on the effectivity and efficiency of $\eta^{(2), PU}$ can be found in \cite{RiWi15_dwr,endtmayer2020reliability}, respectively. The main objective of the remainder of this paper is to compute the adjoint solution with a feedforward neural network.

\subsection{Effectivity index}
To evaluate the goodness of the error estimator we introduce the effectivity index 
\begin{align*}
    I_{eff} = \frac{\left|\eta^{(2), PU}\right|}{|J(u)-J(\Tilde{u})|}.
\end{align*}
If $J(u)$ is unknown, we approximate it by $J(\hat{u})$, where $\hat{u}$ is the solution of the PDE on a very fine grid. We desire that the effectivity index converges to $1$, which signifies that our error estimator is a good approximation of the error in the goal functional.

\section{Neural networks}
\label{sec:NN}
In order to realize neural network guided DWR, we consider feedforward neural networks $u_{NN}: \mathbb{R}^d \rightarrow \mathbb{R}$, where $d$ is the dimension of the domain $\Omega$ plus the dimension of $u$ and the dimension of all the derivatives of $u$ that are required for the adjoint problem.
The neural networks can be expressed as
\begin{align*}
    u_{NN}(x) = T^{(L)} \circ \sigma \circ T^{(L-1)} \circ \cdots \circ \sigma \circ T^{(1)}(x),
\end{align*}
where $T^{(i)}: \mathbb{R}^{n_{i-1}} \rightarrow \mathbb{R}^{n_{i}}, y \mapsto  W^{(i)}y\, +\, b^{(i)}$ are affine transformations for $1 \leq i \leq L$, with weight matrices $W^{(i)} \in \mathbb{R}^{n_i \times n_{i-1}}$ and bias vectors $b^{(i)} \in \mathbb{R}^{n_i}$. Here $n_{i}$ denotes the number of neurons in the $i$.th layer with $n_0 = d$ and $n_L = 1$. $\sigma: \mathbb{R} \rightarrow \mathbb{R}$ is a nonlinear activation function, which is the hyperbolic tangent function throughout this work. Derivatives of neural networks can be computed with back propagation (see e.g. \cite{RuHiWi86,HiHi19}), a special case of reverse mode automatic differentiation \cite{Nocedal2006}.  Similarly higher order derivatives can be calculated by applying automatic differentiation recursively. 

\subsection{Universal function approximators}
Cybenko \cite{Cybenko1989} and Hornik \cite{HORNIK1991} proved a first version of the universal approximation theorem, which states that continuous functions can be approximated to arbitrary precision by single hidden layer neural networks. A few years later Pinkus \cite{pinkus_1999} generalized their findings and showed that single hidden layer neural networks can uniformly approximate a function and its partial derivatives. The space of single hidden layer neural networks is given by
\begin{align*}
    \mathcal{M}(\sigma) := \span \left\lbrace \sigma(w \cdot x + b) \;\middle\vert\; w \in \mathbb{R}^d, b \in \mathbb{R} \right\rbrace.
\end{align*}
\begin{theorem}[Universal Approximation Theorem \cite{pinkus_1999}]
    Let $m^{(i)} \in \mathbb{N}_0^d$ be multi indices for $1 \leq i \leq s$ and set $m = \underset{1 \leq i \leq s}{\max} |m^{(i)}|$. Assume $\sigma \in C^m(\mathbb{R})$ and $\sigma$ is not a polynomial. Then for any $f \in \cap_{i = 1}^s C^{m^{(i)}}(\mathbb{R}^d)$, any compact $K \subset \mathbb{R}^d$, and any $\epsilon > 0$, there exists $g \in \mathcal{M}(\sigma)$ such that
    \begin{align*}
        \underset{x \in K}{\max} \left| D^k f(x) - D^k g(x)  \right| < \epsilon,
    \end{align*}
    for all $k \in \mathbb{N}_0^d$ for which $k \leq m^{(i)}$ for some $1 \leq i \leq s$.
\end{theorem}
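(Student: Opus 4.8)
The statement is a density claim: one must show that $\mathcal{M}(\sigma)$ is dense in $f$ together with the prescribed partial derivatives, uniformly on $K$. The plan is to measure closeness through the finitely many seminorms $\sup_{x\in K}\lvert D^k(\cdot)\rvert$ indexed by the multi-indices $k$ with $k\le m^{(i)}$ for some $1\le i\le s$ (call this the \emph{relevant topology}), to show that the closure of $\mathcal{M}(\sigma)$ in this topology already contains every polynomial, and then to invoke a simultaneous polynomial approximation of $f$ and its relevant derivatives. First I would reduce to a smooth activation. Since $\sigma\in C^m(\mathbb{R})$ is only $m$ times differentiable, I would mollify: for a smooth compactly supported kernel $\varphi$, set $\sigma_\varphi=\sigma*\varphi\in C^\infty(\mathbb{R})$. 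Writing $\sigma_\varphi(w\cdot x+b)=\int \sigma(w\cdot x+b-y)\varphi(y)\,dy$ exhibits $\sigma_\varphi(w\cdot x+b)$ as an integral of elements of $\mathcal{M}(\sigma)$ (fixed $w$, shifted bias), whose Riemann sums converge in the relevant topology because $\sigma\in C^m$. Hence $\mathcal{M}(\sigma_\varphi)$ lies in the closure of $\mathcal{M}(\sigma)$, and since $\sigma_\varphi$ can be chosen non-polynomial it suffices to treat $\sigma\in C^\infty$.

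Second, for $\sigma\in C^\infty$ that is not a polynomial, I would generate all ridge monomials. The key identity is that differentiating $t\mapsto\sigma(t\,a\cdot x+b)$ in the scalar parameter $t$ yields, at $t=0$,
\[
\frac{d^k}{dt^k}\sigma(t\,a\cdot x+b)\Big|_{t=0}=(a\cdot x)^k\,\sigma^{(k)}(b).
\]
Because $\sigma$ is not a polynomial, no derivative $\sigma^{(k)}$ vanishes identically, so there is $b_k$ with $\sigma^{(k)}(b_k)\neq 0$. The $k$-th order difference quotients in $t$ of $\sigma(t\,a\cdot x+b_k)$ are finite linear combinations of elements of $\mathcal{M}(\sigma)$, and—using $\sigma\in C^\infty$—they converge to $(a\cdot x)^k\sigma^{(k)}(b_k)$ in the relevant topology, the spatial derivatives converging simultaneously. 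Dividing by $\sigma^{(k)}(b_k)$ shows that every ridge monomial $(a\cdot x)^k$, for every direction $a\in\mathbb{R}^d$ and every $k\in\mathbb{N}_0$, lies in the closure of $\mathcal{M}(\sigma)$. Since the powers of linear forms $(a\cdot x)^k$ span the homogeneous polynomials of degree $k$ in $d$ variables, the closure of $\mathcal{M}(\sigma)$ contains every polynomial.

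Finally, I would apply a simultaneous polynomial approximation: for $f\in\cap_{i=1}^s C^{m^{(i)}}(\mathbb{R}^d)$ and the finite family of relevant multi-indices, there is a polynomial $p$ with $\sup_{x\in K}\lvert D^k f(x)-D^k p(x)\rvert<\epsilon/2$ for all relevant $k$ (a Weierstrass-type result for $C^m$, provable via Bernstein polynomials on a box containing $K$, whose derivatives converge to those of $f$ when $f\in C^m$, or by mollifying $f$ and Taylor-expanding). Combining with the previous step, $p$ lies in the closure of $\mathcal{M}(\sigma)$, so some $g\in\mathcal{M}(\sigma)$ satisfies $\sup_{x\in K}\lvert D^k p(x)-D^k g(x)\rvert<\epsilon/2$ for all relevant $k$, and the triangle inequality gives the claim.

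The main obstacle is the upgrade from $C^0$ density to simultaneous control of the prescribed derivatives. In the difference-quotient step one must justify that the parameter limit commutes with spatial differentiation of the required orders; this is precisely what forces the hypothesis $\sigma\in C^m$ with $m=\max_{1\le i\le s}\lvert m^{(i)}\rvert$, and it is handled most cleanly after the mollification reduction to $C^\infty$. The multi-index bookkeeping—that $D^k$ is required only for $k\le m^{(i)}$ for some $i$—then poses no further difficulty, since it merely selects which finitely many seminorms appear.
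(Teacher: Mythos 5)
The paper does not prove this theorem itself---it quotes it verbatim from Pinkus \cite{pinkus_1999}---and your plan is essentially a faithful reconstruction of the argument in that cited source: mollify to reduce to a smooth non-polynomial activation, generate the ridge monomials $(a\cdot x)^k$ by differentiating in the scalar weight parameter, use that powers of linear forms span all polynomials, and finish with simultaneous Weierstrass approximation in the finitely many relevant seminorms. The two points you compress (that $\sigma*\varphi$ can be chosen non-polynomial unless $\sigma$ is itself a polynomial, and polynomial density in the anisotropic space $C^{m^{(1)},\dots,m^{(s)}}$ on compacta) are precisely the auxiliary lemmas Pinkus invokes, so your route coincides with the proof the paper defers to.
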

\noindent This theoretical result motivates the application of neural networks for the numerical approximation of partial differential equations.

\subsection{Residual minimization with neural networks}
Residual minimization with neural networks has become popular in the last few years by the works of Raissi, Perdikaris and Karniadakis on physics-informed neural networks (PINNs) \cite{raissi2017pinns} and the paper of Sirignano and Spiliopoulos on the "Deep Galerkin Method" \cite{sirignano2017dgm}. For their approach one can consider the strong formulation of the stationary PDE
\begin{align*}\refstepcounter{numctr}\label{strong_form}
\begin{split}
    \mathcal{N}(u,x) &= 0 \quad \text{in}\ \Omega \\
    \mathcal{B}(u,x) &= 0 \quad \text{on}\ \partial\Omega 
\end{split} \tag{\thenumctr}
\end{align*}
where $\mathcal{N}$ is a differential operator and $\mathcal{B}$ is a boundary operator.
An example for the differential operator $\mathcal{N}$ is given by the semi-linear form $\mathcal{A}(u)(v)$ introduced in Section \ref{sec:abstract_problem}. The boundary operator $\mathcal{B}$ in case of Dirichlet conditions is realized in the weak formulation as usual in the function space $U$. One then needs to find a neural network $u_{NN}$, which minimizes the loss function
\begin{align*}
     L(u_{NN}) = \frac{1}{n_\Omega}  \sum_{i = 1}^{n_\Omega} \mathcal{N}\left(u_{NN},x_i^\Omega\right)^2 + \frac{1}{n_{\partial\Omega}}  \sum_{i = 1}^{n_{\partial\Omega}} \mathcal{B}\left(u_{NN},x_i^{\partial\Omega}\right)^2, 
\end{align*}
where $x_1^\Omega, \dots, x_{n_\Omega}^\Omega \in \Omega$ are collocation points inside the domain and $x_1^{\partial\Omega}, \dots, x_{n_{\partial\Omega}}^{\partial \Omega} \in \partial\Omega$ are collocation points on the boundary. In \cite{vandermeer2020optimally} it has been shown that the two components of the loss function need to be weighted appropriately to yield accurate results. Therefore, we use a modified version of this method which circumvents these issues.

\subsection{Our approach} \label{approach}
Let us again consider the abstract PDE problem in its strong formulation (\ref{strong_form}).
For simplicity, we only consider Dirichlet boundary conditions, i.e. $\mathcal{B}(u,x) := u(x) - g(x)$.
Additionally, in our work we use the approach of Berg and Nystr\"om \cite{Berg_2018}, who used the ansatz
\begin{align*}\refstepcounter{numctr}\label{ansatz}
    u(x) := d_{\partial \Omega}(x)\cdot u_{NN}(x) + \tilde{g}(x) \quad \text{for}\, x \in \bar{\Omega} \tag{\thenumctr} 
\end{align*} 
to fulfill inhomogeneous Dirichlet boundary conditions exactly. Here $\tilde{g}$ denotes the extension of the boundary data $g$ to the entire domain $\bar{\Omega}$, which is continuously differentiable up to the order of the differential operator $\mathcal{N}$. Berg and Nystr\"om \cite{Berg_2018} used the distance to the boundary $\partial \Omega$ as their function $d_{\partial \Omega}$. However, it is sufficient to use a function $d_{\partial \Omega}$ which is continuously differentiable up to the order of the differential operator $\mathcal{N}$ with the properties
\begin{align*}
    d_{\partial \Omega}(x) \begin{cases}
    = 0 & \text{for } x \in \partial \Omega \\
    \neq 0 & \text{for } x \in \Omega
    \end{cases}.
\end{align*}
Thus, $d_{\partial \Omega}$ can be interpreted as a level-set function, since
\begin{align*}
    \Omega = \left\lbrace x \in \bar{\Omega} \;\middle\vert\; d_{\partial \Omega}(x) \neq 0 \right \rbrace \text{ and } \partial\Omega = \left \lbrace x \in \bar{\Omega} \;\middle\vert\; d_{\partial \Omega}(x) = 0 \right \rbrace .
\end{align*}
Obviously, for this kind of ansatz for the solution of the PDE, it holds that
\begin{align*}
\mathcal{B}(u,x) = u(x) - g(x) = \left[d_{\partial \Omega}(x)\cdot u_{NN}(x) + \tilde{g}(x) \right] - g(x) = 0 \quad  \text{on}\ \partial\Omega.
\end{align*}
Therefore, in contrast to some previous works, we do not need to account for the boundary conditions in our loss function, which is a big benefit of our approach, since proper weighting of the different residual contributions in the loss function is not required. It might only be a little cumbersome to fulfill the boundary conditions exactly when dealing with mixed boundary condition, but the form of the ansatz function for such boundary conditions has been laid out in \cite{lyu2020enforcing}.

\def\layersep{2.5cm}
\definecolor{dark-blue}{RGB}{0,0,255}

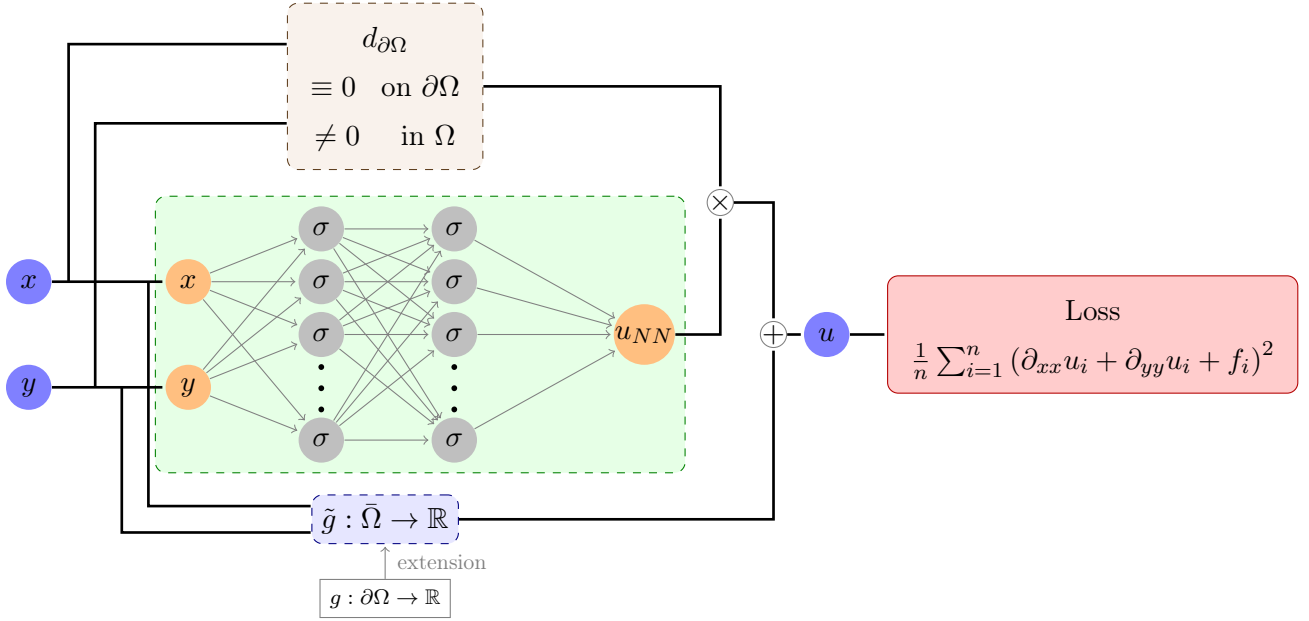
\begin{figure}[H]
\begin{center}
\begin{tikzpicture}[scale = 0.7, shorten >=1pt,->,draw=black!50, node distance=\layersep]
    \tikzstyle{neuron}=[circle,fill=black!25,minimum size=17pt,inner sep=0pt]
    \tikzstyle{input neuron}=[neuron, fill=orange!50];
    \tikzstyle{output neuron}=[neuron, fill=orange!50];
    \tikzstyle{hidden neuron}=[neuron, fill=gray!50];
    \tikzstyle{neuron missing}=[draw=none, scale=2, text height=0.333cm, execute at begin node=\color{black}$\vdots$, minimum size=7pt];
    \tikzstyle{operator} = [circle, draw, inner sep=-0.5pt, minimum height = 0.2cm]
    
    \tikzstyle{coordinate}=[neuron, fill=dark-blue!50];

    
    \node[coordinate] (X) at (-3,-1.5) {$x$};
    \node[coordinate] (Y) at (-3,-3.5) {$y$};

    
    \node[input neuron] (I-1) at (0,-1.5) {$x$};
    \node[input neuron] (I-2) at (0,-3.5) {$y$};
    
    \draw[-, black, line width = 1pt](X) -- (I-1);
    \draw[-, black, line width = 1pt](Y) -- (I-2);

    \foreach \name / \y in {1,2,3,5}
        \path[yshift=0.5cm]
            node[hidden neuron] (H_ONE-\name) at (\layersep,-\y cm) {$\sigma$};
    \node[neuron missing] (H_ONE-4) at (\layersep,-3.5 cm) {};
            
    \foreach \name / \y in {1,2,3,5}
        \path[yshift=0.5cm]
            node[hidden neuron] (H_TWO-\name) at (\layersep + \layersep,-\y cm) {$\sigma$};
     \node[neuron missing] (H_TWO-4) at (\layersep + \layersep,-3.5 cm) {};

    \node[output neuron, right of=H_TWO-3] (O) {$u_{NN}$};

    \foreach \source in {1,2}
        \foreach \dest in {1,2,3,5}
            \path (I-\source) edge (H_ONE-\dest);
            
    \foreach \source in {1,2,3,5}
        \foreach \dest in {1,2,3,5}
            \path (H_ONE-\source) edge (H_TWO-\dest);

    \foreach \source in {1,2,3,5}
        \path (H_TWO-\source) edge (O);
    
    \begin{pgfonlayer}{bg}    
        \node[draw,green!50!black, fill = green!10 , dashed, rounded corners,fit=(I-1) (I-2) (H_ONE-1) (H_ONE-5) (H_TWO-1) (H_TWO-5) (O)] {};
    \end{pgfonlayer}
    
    \node [draw, brown!50!black, dashed, rounded corners, fill=brown!10]  (D) at (\layersep + 1.2cm, 2.2cm) {$\begin{array}{c}{\color{black} d_{\partial\Omega}}\\ {\color{black}\equiv 0 \phantom{a} \mbox{ on } \partial\Omega} \\ {\color{black}\neq 0 \phantom{aa} \mbox{ in } \Omega} \end{array}$};
    
    \draw[-, black, line width = 1pt] (X)+(0.75cm,0cm) -- ++(0.75cm,4.5cm) -- ++(4.15cm, 0cm);
    \draw[-, black, line width = 1pt] (Y)+(1.25cm,0cm) -- ++(1.25cm,5cm) -- ++(3.65cm, 0cm);
    
    
    \node [draw, blue!50!black, dashed, rounded corners, fill=blue!10]  (B) at (\layersep + 1.2cm, -6cm) {${\color{black}\tilde{g} : \bar\Omega \rightarrow \mathbb{R}}$};
    
    \draw[-, black, line width = 1pt] (Y)+(1.75cm,0cm) -- ++(1.75cm,-2.75cm) -- ++(3.6cm, 0cm);
    \draw[-, black, line width = 1pt] (X)+(2.25cm,0cm) -- ++(2.25cm,-4.25cm) -- ++(3.1cm, 0cm);

    \node [draw]  (oldB) at (\layersep + 1.2cm, -7.5cm) {\scriptsize$g : \partial\Omega \rightarrow \mathbb{R}$};
    
    \draw (oldB) -- node [gray, scale= 0.75, xshift = 1cm]{extension} (B);
    
    \node [operator] (mult) at (10,0) {$\times$};
    \draw[-, black, line width = 1pt] (D) to (D -| mult) to (mult);
    \draw[-, black, line width = 1pt] (O) to (O -| mult) to (mult);
    
    \node [operator] (add) at (11,-2.5) {$+$};
    \draw[-, black, line width = 1pt] (mult) to (mult -| add) to (add);
    \draw[-, black, line width = 1pt] (B) to (B -| add) to (add);
    
    \node[neuron, fill=dark-blue!50] (U) at (12,-2.5) {$u$};
    \draw[-, black, line width = 1pt] (add) to (U);
    
    \node [draw, red!70!black, rounded corners, fill=red!20]  (loss) at (17, -2.5) {$\begin{array}{c} {\color{black}\mbox{Loss}}\\ {\color{black}\frac{1}{n}\sum_{i=1}^n\left(\partial_{xx}u_i + \partial_{yy}u_i + f_i \right)^2}\end{array}$};
    \draw[-, black, line width = 1pt] (U) to (loss);
\end{tikzpicture}
\caption{Section \thesubsection: Diagram of our ansatz $u = d_{\partial\Omega}\cdot u_{NN} + \tilde{g}$ for the two dimensional Poisson problem. Here we used the abbreviations $u_i := u(x_i, y_i)$ and $f_i := f(x_i,y_i)$.}
\end{center}
\end{figure}

 \subsubsection{Approximation theorem}
In the following, we prove that our neural network solutions approximate the analytical solutions well if their loss is sufficiently small.
Our neural networks $u_{NN}$ have been trained with the mean squared error of the residual of the PDE, i.e.
\begin{align*}\refstepcounter{numctr}\label{loss}
    L(u) = \frac{1}{n}  \sum_{i = 1}^n \mathcal{N}(u,x_i)^2, \tag{\thenumctr} 
\end{align*}
where $n$ is the number of collocation points $x_i$ from the domain $\Omega$. For the sake of generality, let us consider the generalized loss
\begin{align*}
    \hat{L}_p(u) = \frac{1}{|\Omega|} \int_\Omega |\mathcal{N}(u,x)|^p\ \mathrm{d}x
\end{align*}
for $p \geq 1$.
Then, the loss (\ref{loss}) is just the Monte Carlo approximation of the generalized loss for $p = 2$.
We briefly recall the approximation theorem from \cite{vandermeer2020optimally}
and show that the classical solution of the Poisson problem satisfies the assumptions of the approximation theorem.

\begin{lemma}[Approximation theorem \cite{vandermeer2020optimally}]\label{thm:approximation}
Let $2 \leq p \leq \infty$. We consider a PDE of the form (\ref{strong_form}) on a bounded, open domain $\Omega \subset \mathbb{R}^m$ with Lipschitz boundary $\partial \Omega$ and $\mathcal{N}(u,x) = N(u,x) - \hat{f}(x)$, where $N$ is a linear, elliptic operator and $\hat{f} \in L^2(\Omega)$. Let there be a unique solution $\hat{u} \in H^1(\Omega)$ and 
let the following stability estimate
\begin{align*}
    \| u \|_{H^1(\Omega)} \leq C \| f  \|_{L^2(\Omega)}
\end{align*}
hold for $u \in H^1(\Omega), f \in L^2(\Omega)$ with $N(u,x) = f(x)$ in $\Omega$.
Then we have for an approximate solution $u\in H^1(\Omega)$ that
\begin{align*}
    \forall \epsilon > 0\, \exists \delta > 0: \quad \hat{L}_p(u) < \delta \Longrightarrow \|u - \hat{u} \|_{H^1(\Omega)} < \epsilon.
\end{align*}
\end{lemma}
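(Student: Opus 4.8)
The plan is to reduce the statement to the given linear stability estimate by exploiting the linearity of $N$. First I would introduce the error $w := u - \hat{u} \in H^1(\Omega)$. Since $\hat{u}$ is the unique solution we have $N(\hat{u}, \cdot) = \hat{f}$, and because $N$ acts linearly,
\[
    N(w, x) = N(u,x) - N(\hat{u}, x) = N(u,x) - \hat{f}(x) = \mathcal{N}(u,x).
\]
Thus the error $w$ itself solves a linear problem whose right-hand side is exactly the PDE residual $\mathcal{N}(u,\cdot)$ that the loss measures. This observation is the conceptual heart of the proof: minimizing the loss is the same as driving the data of a linear problem for the error to zero.

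Before applying the stability estimate, I would check that its hypotheses are met, namely that the estimate is stated for pairs with $H^1$-solution and $L^2$ right-hand side. Here $w = u - \hat{u} \in H^1(\Omega)$ by assumption, and the right-hand side $\mathcal{N}(u, \cdot)$ lies in $L^2(\Omega)$ precisely because $2 \le p$ together with $\hat{L}_p(u) < \infty$ forces $\mathcal{N}(u, \cdot) \in L^p(\Omega) \subset L^2(\Omega)$ on the bounded domain $\Omega$. This verification, ensuring the residual is square-integrable so that the estimate is genuinely applicable, is the one place where $p \ge 2$ and the boundedness of $\Omega$ are really used, and it is the main (if modest) obstacle. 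With the hypotheses in place, the stability estimate gives
\[
    \|u - \hat{u}\|_{H^1(\Omega)} = \|w\|_{H^1(\Omega)} \le C \,\|\mathcal{N}(u,\cdot)\|_{L^2(\Omega)}.
\]

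Next I would convert the $L^2$-norm of the residual into the loss functional $\hat{L}_p$. Because $\Omega$ is bounded and $p \ge 2$, Hölder's inequality yields $\|\mathcal{N}(u,\cdot)\|_{L^2(\Omega)} \le |\Omega|^{\frac12 - \frac1p}\,\|\mathcal{N}(u,\cdot)\|_{L^p(\Omega)}$, while by definition $\|\mathcal{N}(u,\cdot)\|_{L^p(\Omega)} = \big(|\Omega|\,\hat{L}_p(u)\big)^{1/p}$. Combining the last three displays collapses the powers of $|\Omega|$ and leaves a bound of the clean form
\[
    \|u - \hat{u}\|_{H^1(\Omega)} \le C\,|\Omega|^{1/2}\,\hat{L}_p(u)^{1/p}.
\]
The case $p = \infty$ is handled identically, replacing the Hölder step by $\|\cdot\|_{L^2(\Omega)} \le |\Omega|^{1/2}\|\cdot\|_{L^\infty(\Omega)}$.

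Finally, the $\epsilon$--$\delta$ statement follows simply by inverting this inequality: given $\epsilon > 0$, I would set $\delta := \big(\epsilon / (C|\Omega|^{1/2})\big)^p$, so that $\hat{L}_p(u) < \delta$ implies $\|u-\hat{u}\|_{H^1(\Omega)} < \epsilon$. The argument is essentially a single chain of inequalities; no compactness or limiting argument is needed, only the linearity of $N$, the a priori stability bound, and the embedding $L^p(\Omega) \hookrightarrow L^2(\Omega)$ on a finite-measure domain.
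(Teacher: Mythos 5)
Your proof is correct and follows essentially the same route as the paper's: the paper phrases the residual as a perturbation $f_{\text{error}} = \mathcal{N}(u,\cdot)$ of the right-hand side and invokes stability plus linearity, then H\"older's inequality and the identity $\|\mathcal{N}(u,\cdot)\|_{L^p(\Omega)} = \bigl(|\Omega|\,\hat{L}_p(u)\bigr)^{1/p}$, with exactly your choice $\delta = \epsilon^p C^{-p}|\Omega|^{-p/2}$. Your explicit verification that the residual lies in $L^2(\Omega)$ and your separate treatment of $p=\infty$ are minor tidying of details the paper leaves implicit, not a different argument.
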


\begin{proof}
Let
\begin{align*}
    \delta = \epsilon^p C^{-p} |\Omega|^{-\frac{p}{2}}.
\end{align*}
Let $u = d_{\partial \Omega}(x)\cdot u_{NN}(x) + \tilde{g}(x) \in H^1(\Omega)$
be an approximate solution of the PDE with $\hat{L}_p(u) < \delta$, which means that there exists a perturbation to the right-hand side $f_{\text{error}} \in L^2(\Omega)$ such that $N(u,x) = \hat{f}(x) + f_{\text{error}}(x)$. By the stability estimate and the linearity of $N$, we have
\begin{align*}
    \|u - \hat{u}\|_{H^1(\Omega)} \leq C \|(\hat{f} + f_\text{error}) - \hat{f} \|_{L^2(\Omega)} = C \|f_\text{error}\|_{L^2(\Omega)}.
\end{align*}
Applying the H\"older inequality to the norm of $f_\text{error}$ and using $2 \leq p \leq \infty$ yields
\begin{align*}
    \|f_\text{error}\|_{L^2(\Omega)} \leq |\Omega|^{\frac{1}{2} - \frac{1}{p}}\|f_\text{error}\|_{L^p(\Omega)}.
\end{align*}
Combing the last two inequalities gives us the desired error bound
\begin{align*}
    \|u - \hat{u}\|_{H^1(\Omega)} &\leq  C \|f_\text{error}\|_{L^2(\Omega)} \leq C |\Omega|^{\frac{1}{2} - \frac{1}{p}}\|f_\text{error}\|_{L^p(\Omega)} \\
    &= C |\Omega|^{\frac{1}{2}} \hat{L}_p(u)^{\frac{1}{p}} \\
    &< C |\Omega|^{\frac{1}{2}} \delta^{\frac{1}{p}} = \epsilon.
\end{align*}
In the last inequality, we used that the generalized loss of our approximate solution is sufficiently small, i.e. $\hat{L}_p(u) < \delta$. 
\end{proof}

\noindent
Let us recapitulate an important result from the Schauder theory \cite{gilbarg2001}, which yields the existence and uniqueness of classical solutions of the Poisson problem if we assume higher regularity of our problem, i.e. when we work with H\"older continuous functions and sufficiently smooth domains.

\begin{lemma}[Solution in classical function spaces]
\label{theo_classical_spaces}
    Let $0 < \lambda < 1$ be such that $\Omega  \subset \mathbb{R}^m$ is a domain with $C^{2,\lambda}$ boundary, $\tilde{g} \in C^{2,\lambda}(\bar{\Omega})$ and $\hat{f} \in C^{0,\lambda}(\bar{\Omega})$. Then Poisson's problem, which is of the form (\ref{strong_form}) with $N(u,x) := -\Delta u$, has a unique solution $\hat{u} \in C^{2,\lambda}(\bar{\Omega})$.
\end{lemma}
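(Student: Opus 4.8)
The plan is to recognize the statement as the classical global existence-and-uniqueness theorem for the Dirichlet problem of Poisson's equation with Hölder data, i.e. Theorem 6.14 of \cite{gilbarg2001} specialized to $L=-\Delta$. I would first homogenize the boundary data, then settle uniqueness by the maximum principle, and finally obtain existence together with $C^{2,\lambda}$ regularity by combining the solvability of the Laplacian with global Schauder estimates.

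First I would reduce to zero boundary values. The boundary operator in (\ref{strong_form}) reads $\mathcal{B}(u,x) = u - g$ with $g := \tilde g|_{\partial\Omega}$, so finding $\hat u \in C^{2,\lambda}(\bar\Omega)$ with $-\Delta \hat u = \hat f$ in $\Omega$ and $\hat u = g$ on $\partial\Omega$ is equivalent to finding $w := \hat u - \tilde g$ solving
\begin{align*}
    -\Delta w = \hat f + \Delta \tilde g =: F \quad \text{in } \Omega, \qquad w = 0 \quad \text{on } \partial\Omega.
\end{align*}
Since $\tilde g \in C^{2,\lambda}(\bar\Omega)$ we have $\Delta \tilde g \in C^{0,\lambda}(\bar\Omega)$, hence $F \in C^{0,\lambda}(\bar\Omega)$, and any solution $w \in C^{2,\lambda}(\bar\Omega)$ returns $\hat u = w + \tilde g \in C^{2,\lambda}(\bar\Omega)$. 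It thus suffices to treat the homogeneous problem with Hölder right-hand side $F$.

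Uniqueness is immediate: if $w_1, w_2$ both solve the homogeneous problem, their difference is harmonic with vanishing boundary values and continuous on $\bar\Omega$, so the maximum principle forces $w_1 \equiv w_2$. For existence I would proceed in two stages. A particular solution is furnished by the Newtonian potential $NF(x) = \int_\Omega \Phi(x-y) F(y)\,\mathrm{d}y$, with $\Phi$ the fundamental solution of $-\Delta$, which satisfies $-\Delta(NF) = F$ and is locally $C^{2,\lambda}$ by the potential-theoretic Hölder estimates (\cite{gilbarg2001}, Theorem 4.6). It then remains to correct the boundary trace by solving $\Delta v = 0$ in $\Omega$ with $v = -NF$ on $\partial\Omega$; this Dirichlet problem is solvable and attains its boundary data continuously, because a domain with $C^{2,\lambda}$ boundary satisfies an exterior sphere condition and hence admits barriers at every boundary point, so Perron's method applies. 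The sum $w = NF + v$ solves the homogeneous problem, and the global Schauder estimate $\|w\|_{C^{2,\lambda}(\bar\Omega)} \leq C\big(\|w\|_{C^0(\bar\Omega)} + \|F\|_{C^{0,\lambda}(\bar\Omega)}\big)$ (\cite{gilbarg2001}, Theorem 6.6), together with the maximum-principle bound on $\|w\|_{C^0}$, upgrades $w$ to $C^{2,\lambda}(\bar\Omega)$.

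The main obstacle is precisely this last global Schauder estimate up to the boundary, and it is here that the $C^{2,\lambda}$ smoothness of $\partial\Omega$ is indispensable. While the interior estimate follows from freezing coefficients and the bounds on the Newtonian potential, the boundary estimate requires flattening $\partial\Omega$ locally by a $C^{2,\lambda}$ diffeomorphism, freezing the transformed coefficients at a boundary point, and absorbing the resulting lower-order perturbation — a technically delicate step that I would cite from \cite{gilbarg2001} rather than reproduce. An equivalent route packages all of this into the method of continuity (\cite{gilbarg2001}, Theorem 5.2), anchored on the already-established solvability of the Laplacian; either way the crux, and the only place the hypothesis on $\partial\Omega$ enters in full strength, is the a priori $C^{2,\lambda}$ control near the boundary.
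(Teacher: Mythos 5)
Your proposal is correct and takes essentially the same approach as the paper, whose entire proof is the single line ``Follows immediately from \cite{gilbarg2001}[Theorem 6.14]'' --- exactly the theorem you identify before going on to reconstruct its internal argument (homogenization, maximum principle, Newtonian potential, Perron's method, Schauder theory). One caveat inside your expansion: the global estimate of \cite{gilbarg2001}[Theorem 6.6] is an \emph{a priori} estimate for functions already known to lie in $C^{2,\lambda}(\bar{\Omega})$, so by itself it cannot ``upgrade'' the Perron solution from $C^{2}(\Omega)\cap C^{0}(\bar{\Omega})$; the rigorous closure is the boundary regularity theory (e.g. \cite{gilbarg2001}[Lemma 6.18]) or the method-of-continuity route you name as an alternative, so your deferral to the cited reference does stand.
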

\begin{proof}
Follows immediately from \cite{gilbarg2001}[Theorem 6.14].
\end{proof}

\noindent
With Lemma \ref{theo_classical_spaces} we can now show that the approximation theorem holds for the Poisson problem in classical function spaces.
\begin{theorem}
 \label{theo_classical_solution}
   Let $0 < \lambda < 1$ be such that $\Omega \subset \mathbb{R}^m$ is a bounded, open domain with $C^{2,\lambda}$ boundary, $\tilde{g} \in C^{2,\lambda}(\bar{\Omega})$ and $\hat{f} \in C^{0,\lambda}(\bar{\Omega})$. Then Poisson's problem, which is of the form (\ref{strong_form}) with $N(u,x) := -\Delta u$, has a unique solution $\hat{u} \in H^1(\Omega)$.
   Furthermore, there exists
    $u = d_{\partial \Omega}(x)\cdot u_{NN}(x) + \tilde{g}(x) \in H^1(\Omega)$ with the estimate
\begin{align*}
    \forall \epsilon > 0\, \exists \delta > 0: \quad \hat{L}_p(u) < \delta \Longrightarrow \|u - \hat{u} \|_{H^1(\Omega)} < \epsilon.
\end{align*}
\end{theorem}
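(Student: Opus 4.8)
The plan is to read this theorem as the composition of the two preceding results: Lemma \ref{theo_classical_spaces} manufactures the exact solution, while Lemma \ref{thm:approximation} supplies the approximation estimate once its hypotheses are verified. Concretely, I would first invoke Lemma \ref{theo_classical_spaces} to obtain a unique classical solution $\hat{u} \in C^{2,\lambda}(\bar{\Omega})$ of the Poisson problem under the stated regularity assumptions on $\partial\Omega$, $\tilde{g}$ and $\hat{f}$. The remaining work is then purely a matter of (i) upgrading this to a unique solution in $H^1(\Omega)$, and (ii) checking that the Poisson operator $N = -\Delta$ fits the abstract framework of Lemma \ref{thm:approximation}, so that the estimate transfers verbatim to $u = d_{\partial\Omega}\cdot u_{NN} + \tilde{g}$.

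For step (i), since $\Omega$ is bounded, the embedding $C^{2,\lambda}(\bar{\Omega}) \hookrightarrow H^1(\Omega)$ is immediate: a function whose derivatives up to second order are continuous on the compact set $\bar{\Omega}$ is square-integrable together with its gradient, so $\hat{u} \in H^1(\Omega)$. For uniqueness in $H^1$ I would argue at the level of the weak formulation: if $u_1, u_2 \in H^1(\Omega)$ both solve the problem with the same Dirichlet data $g$, their difference $w = u_1 - u_2$ lies in $H_0^1(\Omega)$ and satisfies $\int_\Omega \nabla w \cdot \nabla v \,\mathrm{d}x = 0$ for all $v \in H_0^1(\Omega)$; testing with $v = w$ and using the Poincar\'e inequality forces $w = 0$.

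For step (ii), I would verify each hypothesis of Lemma \ref{thm:approximation} in turn. The operator $N(u,x) = -\Delta u$ is linear and uniformly elliptic. Because $\bar{\Omega}$ is compact, $\hat{f} \in C^{0,\lambda}(\bar{\Omega})$ is bounded and hence belongs to $L^2(\Omega)$; a $C^{2,\lambda}$ boundary is in particular Lipschitz; and $p$ is understood to lie in the admissible range $2 \leq p \leq \infty$. The one substantive ingredient is the stability estimate $\|u\|_{H^1(\Omega)} \leq C\|f\|_{L^2(\Omega)}$, which I would establish in the standard way: testing the weak form of $-\Delta u = f$ with $u$ itself, using the Poincar\'e inequality to promote $\|\nabla u\|_{L^2}^2$ to coercivity of the Dirichlet form on $H_0^1(\Omega)$ and Cauchy--Schwarz on the right-hand side, so that Lax--Milgram yields the bound with $C = C(\Omega)$.

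With both lemmas' hypotheses in place, the conclusion follows by direct application of Lemma \ref{thm:approximation}: the ansatz $u = d_{\partial\Omega}\cdot u_{NN} + \tilde{g}$ lies in $H^1(\Omega)$ because $d_{\partial\Omega}, \tilde{g} \in C^{2,\lambda}(\bar{\Omega})$ and $u_{NN}$ is smooth (the activation is $\tanh$), and the implication $\hat{L}_p(u) < \delta \Rightarrow \|u - \hat{u}\|_{H^1(\Omega)} < \epsilon$ is exactly what the lemma delivers. The point I expect to require the most care is the boundary bookkeeping hidden in the stability estimate: the clean bound $\|u\|_{H^1} \leq C\|f\|_{L^2}$ is legitimate only when applied to the difference $w = u - \hat{u}$, which has homogeneous Dirichlet data precisely because the ansatz of Section \ref{approach} reproduces $g$ on $\partial\Omega$ exactly and $\hat{u}$ does too. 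This is what makes the Poincar\'e inequality, and hence the whole estimate, available, and it is where the exact-boundary-condition construction pays off.
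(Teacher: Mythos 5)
Your proposal is correct and follows essentially the same route as the paper's own proof: existence and uniqueness of the classical solution via Lemma \ref{theo_classical_spaces}, the embedding $C^{2,\lambda}(\bar{\Omega}) \subset H^1(\Omega)$, the Lax--Milgram stability estimate, and then a direct application of Lemma \ref{thm:approximation}. Your extra care about the boundary bookkeeping---that the stability estimate is legitimately applied to the difference $u - \hat{u}$, which carries homogeneous Dirichlet data precisely because the ansatz enforces the boundary condition exactly---is a detail the paper leaves implicit, and you resolve it correctly.
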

\begin{proof}
     From Lemma \ref{theo_classical_spaces} it follows that there exists a unique solution $\hat{u} \in C^{2,\lambda}(\bar{\Omega}) \subset H^1(\Omega)$. Analogously it holds that $u = d_{\partial \Omega}(x)\cdot u_{NN}(x) + \tilde{g}(x) \in H^1(\Omega)$. Furthermore, we have by the Lax-Milgram Lemma that $\hat{u} \in H^1(\Omega)$ is the unique weak solution and fulfills the stability estimate
     \begin{align*}
        \| \hat{u} \|_{H^1(\Omega)} \leq C \| \hat{f} \|_{L^2(\Omega)}.
     \end{align*}
     By Lemma \ref{thm:approximation} the estimate
     \begin{align*}
        \forall \epsilon > 0\, \exists \delta > 0: \quad \hat{L}_p(u) < \delta \Longrightarrow \|u - \hat{u} \|_{H^1(\Omega)} < \epsilon
     \end{align*}
     then also holds.
\end{proof}

\begin{remark}
Theorem \ref{theo_classical_solution} implies that a low loss value of a neural network with high probability corresponds to an accurate approximation $u$ of the exact solution $\hat{u}$ of the PDE, since the loss is a Monte Carlo approximation of the generalized loss, which for a large number of collocation points should be close in value.
\end{remark}

\subsubsection{Neural network solution of the adjoint PDE}
To make a posteriori error estimates for our FEM solution of the primal problem (\ref{primalproblem}), we now use neural networks to solve the adjoint PDE (\ref{adjointproblem}). In an FEM approach, the adjoint PDE would be solved in its variational form as described in Algorithm \ref{dwr_algo}, but we minimize the residual of the strong form using neural networks and hence need to derive the strong formulation of the adjoint PDE first.
After training, the neural network is then projected into the FEM ansatz function space of the adjoint problem. Finally, the a posteriori estimates can be made as usual with the DWR method following again Algorithm \ref{dwr_algo}. 

\begin{remark}
     For linear goal functionals the Riesz representation theorem yields the existence and uniqueness of the strong formulation. Nevertheless, deriving the strong form of the adjoint PDE might be very involved for complicated PDEs, such as fluid structure interaction, e.g. \cite{Ri12_dwr,ZeeBrummelenAkkermanBorst2011}, and goal functionals $J: U \rightarrow \mathbb{R}$. In future works, we aim to extend to alternative approaches which do not require the derivation of the strong form.
\end{remark}

\begin{remark}
     We use neural networks to trade off accuracy for speed. In general the neural network approach requires less collocation points than the finite element method. Therefore, we would expect the neural networks to be faster than the finite element method on finer grids. In our numerical tests we used the coordinates of the degrees of freedom as our collocation points, but the collocation points could also be sampled randomly or one could adaptively choose the collocation points as proposed in \cite{deepxde2019}.
\end{remark}

\section{Algorithmic realization}
\label{sec:algo}
In this section, we describe our final algorithm for the neural network guided dual weighted residual method. In the algorithm, we work with hierarchical FEM spaces, i.e. $U_h^{l} \subset U_h^{l,(2)}$ and $V_h^{l} \subset V_h^{l,(2)}$.

\begin{algorithm}[H]
\caption{Neural network guided DWR algorithm}
\begin{algorithmic}[1]
\State Start with some initial guess $u_h^0, l = 1$.
\State Solve the primal problem: Find $u_h^l \in U_h^l$ such that 
\begin{align*}
    \mathcal{A}\left(u_h^l\right)\left(\phi_h^l\right) = 0 \quad \forall \phi_h^l \in V_h^l ,
\end{align*}
using some nonlinear solver.
\State Compute the interpolations $u_h^{l,(2)} = I_h^{(2)}u_h^l \in U_h^{l,(2)}$.
\State Solve the adjoint problem with a neural network: Find $z = d_{\partial\Omega} \cdot z_{NN} + \tilde{g} \in H^1(\Omega)$ such that
\begin{align*}
    z_{NN} = \underset{\hat{z}_{NN}}{\argmin}\,  L(d_{\partial\Omega} \cdot \hat{z}_{NN} + \tilde{g}).
\end{align*}
\State Project the neural network solution in the enriched FEM space
\begin{align*}
    z_h^{l,(2)} = \pi(z)
\end{align*}
with a projection $\pi: H^1(\Omega) \rightarrow V_h^{l,(2)}$.
\State Compute the restriction $z_h^l = i_h z_h^{l,(2)} \in V_h^l$.
\State Compute the error estimator $\eta^{(2)}_h$ defined in (\ref{comp_erroresti}).

\If{$\left| \eta^{(2)}_h \right| < TOL$}
    \State Algorithm terminates with final output $J\left(u_h^l\right)$.
\EndIf
\State Localize error estimator $\eta^{(2)}_h$ and mark elements.
\State Refine marked elements: $\mathbb{T}_h^l \mapsto \mathbb{T}_h^{l+1}, l = l+1.$
\State Go to Step 2.
\end{algorithmic}
\end{algorithm}

\noindent Here we only consider the Galerkin method for which the ansatz function space and the trial function space coincide, i.e. $U=V$, but $U \neq V$ can be realized in a similar fashion. The novelty compared to the DWR method presented in Chapter 2 are step 4 and step 5 of the algorithm. In the following, we describe these parts in more detail. \\

\noindent In step 4, we solve the strong form of the adjoint problem, which for nonlinear PDEs or nonlinear goal functionals also depends on the primal solution $u_h^{l,(2)}$. The strong form of the adjoint problem is of the form (\ref{strong_form}) and thus we can find a neural network based solution by minimizing the loss (\ref{loss}) with L-BFGS \cite{Liu1989}, a quasi-Newton method. We observed that by using L-BFGS sometimes the loss exploded or the optimizer got stuck at a saddle point. Consequently, we restarted the training loop with a new neural network when the loss exploded or used a few steps with the Adam optimizer \cite{kingma2017adam} when a saddle point was reached. Afterwards, L-BFGS can be used as an optimizer again. During training we used the coordinates of the degrees of freedom as our collocation points. We stopped the training when the loss did not decrease by more than $TOL = 10^{-8}$ in the last $n = 5$ epochs or when we reached the maximum number of epochs, which we chose to be $400$. An alternative stopping criterion on fine meshes could be early stopping, where the collocation points are being split into a training and a validation set and the training stops when the loss on the validation set starts deviating from the loss on the training set, i.e. when the neural network begins to overfit on the training data. \\

\noindent In step 5, we projected the neural network based solution into the enriched FEM space by evaluating it at the coordinates of the degrees of freedom, which yields a unique function $z_h^{l,(2)}$.

\section{Numerical experiments}
\label{sec:tests}
In this section we consider two stationary problems (with in total four numerical tests) with our proposed approach. We consider both linear and nonlinear PDEs and goal functionals. The primal problem, i.e. the original PDE, is being solved with bilinear shape functions. 
The adjoint PDE is solved by minimizing the residual of our neural network ansatz (Sections \ref{sec:NN} and \ref{sec:algo}) and we project the solution into the biquadratic finite element space. For studying the performance, we also compute the adjoint problem with finite elements employing biquadratic shape functions. Finally, this neural network solution is being plugged into the PU DWR error estimator (\ref{pu_dwr_estimator}), which decides which elements will be marked for refinement. To realize the numerical experiments, we couple deal.II \cite{dealII92} with LibTorch, the PyTorch C++ API \cite{paszke2019pytorch}.

\subsection{Poisson's equation}
At first we consider the two dimensional Poisson equation with homogeneous Dirichlet conditions on the unit square.
In our ansatz (\ref{ansatz}), we choose the function 
\begin{align*}
    d_{\partial \Omega}(x,y) = x(1-x)y(1-y).
\end{align*}
Poisson's problem is given by
\begin{align*}
    -\Delta u &= f \quad \text{in}\ \Omega := (0,1)^2 \\
    u &= 0 \quad \text{on}\ \partial\Omega.
\end{align*}
For a linear goal functional $J: V \rightarrow \mathbb{R}$ the adjoint problem then reads:

Find $z \in H^1_0(\Omega)$ such that 
\begin{align*}
	( \nabla \psi, \nabla z ) = J(\psi) \quad \forall \psi \in H^1_0(\Omega).
\end{align*}
Here $(\cdot,\cdot)$ denotes the $L^2$ inner product, i.e. $(f,g) := \int_{\Omega}\,f\cdot g\ \mathrm{d}x$.

\subsubsection{Mean value goal functional}
As a first numerical example of a linear goal functional, we consider the mean value goal functional 
\begin{align*}
    J(u) = \frac{1}{|\Omega |}\int_\Omega u\ \mathrm{d}x.
\end{align*}
The adjoint PDE can be written as
\begin{align*}
	( \nabla \psi, \nabla z ) = \left(\psi, \frac{1}{|\Omega |}\right)
\end{align*}
and can be transformed into its strong form
\begin{align*}
    -\Delta z &= \frac{1}{|\Omega |} \quad \text{in}\ \Omega \\
    z &= 0 \qquad \text{on}\ \partial\Omega.
\end{align*}
We trained a fully connected neural network with two hidden layers with 32 neurons each and the hyperbolic tangent activation function for 400 epochs on 1,000 uniformly sampled points. In \cite{raissi2017pinns} it has been shown that wider and deeper neural networks can achieve a lower $L^2$ error between the neural network and the analytical solutions. However, if we use the support points of the FEM mesh as the collocation points, we cannot use bigger neural networks, since we do not have enough training data. Therefore, we decided to use smaller networks.\\

\noindent We compared our neural network based error estimator with a standard finite element based error estimator:

\begin{figure}[H]
\begin{center}
\begin{tabular}{|c|r|r||c|c|c|c|}
    \hline
   \multicolumn{3}{|c||}{} & \multicolumn{2}{|c|}{Est. error} & \multicolumn{2}{|c|}{$I_{eff}$}  \\ \hline
     Ref. & DoFs & $J(u) - J(u_h)$ & FEM & NN & FEM & NN \\ \hline
     0 &9   & 1.17e-2 &  1.15e-2  &  1.14e-2 & 0.979  &  0.971  \\
     1 &25  & 3.17e-3 & 3.14e-3  &  3.14e-3 & 0.992  &  0.990 \\
     2 &81  & 8.10e-4 & 8.08e-4  &  8.08e-4 & 0.998  &  0.998 \\
     3 &289 & 2.03-4 & 2.04e-4  &  2.04e-4 & 1.00  &  1.00 \\
     4 &1089 & 5.03e-5 & 5.11e-5  &  5.11e-5 & 1.02  &  1.02 \\
     5 &4225 & 1.20e-5 & 1.28e-5  &  1.28e-5 & 1.07  &  1.07 \\ \hline
\end{tabular}
\caption{Section \thesubsubsection: Error estimator results for mean value goal functional.}
\end{center}
\end{figure}
\noindent In this numerical test the neural network refined in the same way as the finite element method and both error indicators yield effectivity indices $I_{eff}$ of approximately $1.0$, which means that the exact error and the estimated error were almost identical.
The error reduction is of second order as to be expected and the overall results 
confirm well similar computations presented in \cite{RiWi15_dwr}[Table 1].

\subsubsection{Regional mean value goal functional}
In the second numerical example, we analyze the mean value goal functional which is only being computed on a subset  $D \subset \Omega$ of the domain. We choose $D := \left[0,\frac{1}{4}\right] \times \left[0,\frac{1}{4}\right]$. For the regional goal function
\begin{align*}
    J(u) = \frac{1}{|D|}\int_D u\ \mathrm{d}x
\end{align*}
the strong form of the PDE is given by
\begin{align*}
    -\Delta z &= \frac{\mathbbm{1}_D}{|D|},
\end{align*}
where $\mathbbm{1}_D$ is the indicator function of $D$. The rest of the training setup is the same as for the previous goal functional.\\

\noindent We obtain the following computational results when comparing finite elements with our neural network approach:

\begin{figure}[H]
\begin{center}
\begin{tabular}{|c||r|c|c|c|r|c|c|c|}
    \hline
   & \multicolumn{4}{|c|}{FEM} & \multicolumn{4}{|c|}{NN}  \\ \hline
    Ref. & DoFs & $J(u) - J(u_h)$ & Est. error & $I_{eff}$  & DoFs & $J(u) - J(u_h)$ & Est. error & $I_{eff}$  \\ \hline
     0 &25   & 3.60e-3  &  3.57e-3  & 0.991  &  25   &  3.60e-3  &  3.43e-3 &  0.953\\
     1 &41  & 1.05e-3  &  1.15e-3  & 1.10  &  41   &  1.05e-3   &  1.07e-3 &  1.02\\
     2 &137  & 2.57e-4  &  2.72e-4  & 1.06  &  137   &  2.57e-4  &  2.53e-4 &  0.986\\
     3 &377 & 6.07e-5  &  6.30e-5  & 1.04  &  349  &  6.08e-5  &  5.93e-5 &  0.976\\
     4 &1153 & 1.67e-5  &  1.86e-5  & 1.11  &  1139  &  1.68e-5  &  1.76e-5 &  1.05\\
     5 &3705 & 4.18e-6  &  4.90e-6  & 1.17  &  3635  &  4.40e-6  &  4.92e-6 &  1.12\\ \hline
\end{tabular}
\caption{Section \thesubsubsection: Error estimator results for regional mean value goal functional.}
\end{center}
\end{figure}

\noindent In this example the finite element method and our approach end up with different grid refinements but both had a similar performance and both error indicators had an effectivity index $I_{eff}$ of approximately $1.0$.

\begin{figure}[H]%
    \centering
    \subfloat[FEM]{
    \includegraphics[width=0.45\columnwidth]{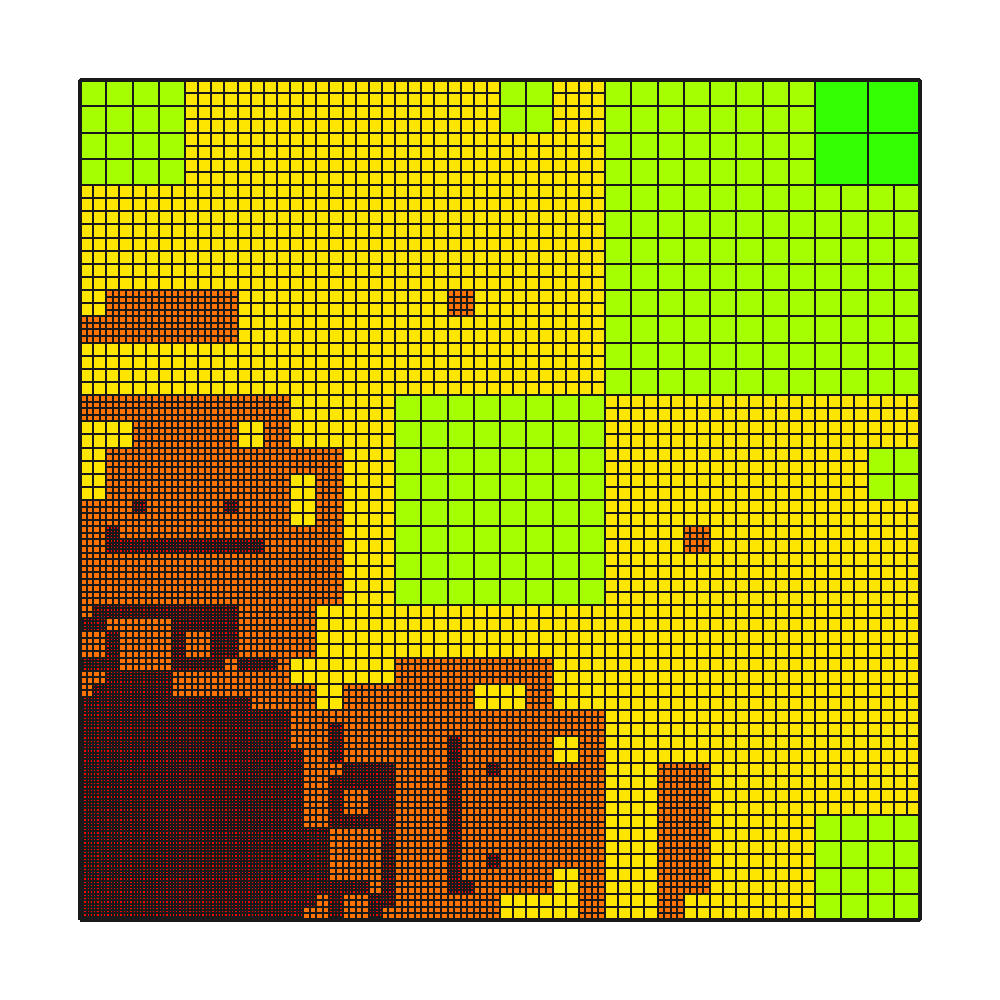}}%
    \quad
    \subfloat[NN]{
    \includegraphics[width=0.45\columnwidth]{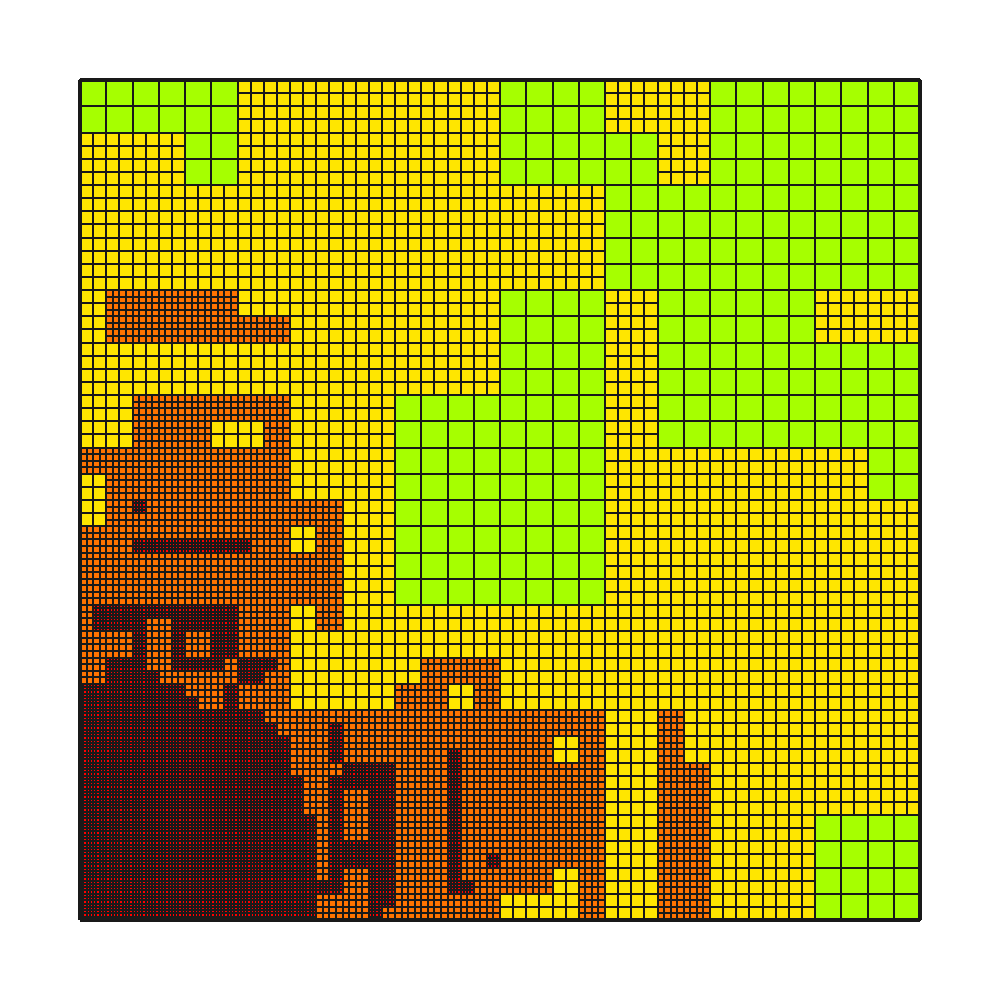}}%
    \caption{Section \thesubsection: Grid refinement with regional mean value goal functional.}%
\end{figure}

\noindent On these grids which have been refined with the different approaches, we can see that the finite element method creates a symmetrical grid refinement. This symmetry can not be observed in the neural network based refinement. Furthermore, our approach refined a few more elements than FEM, but overall our methodology still produced a reasonable grid adaptivity.

\subsubsection{Mean squared value goal functional}
In this third numerical test, an example of a nonlinear goal functional 
is the mean squared value, which reads
\begin{align*}
    J(u) = \frac{1}{|\Omega |}\int_\Omega u^2\ \mathrm{d}x.
\end{align*}

\noindent For a nonlinear goal functional the adjoint problem then needs to be modified to (see also (\ref{adjointproblem}) in Section 2): Find $z \in H^1_0(\Omega)$ such that 
\begin{align*}
	( \nabla \psi, \nabla z ) = J'(u)(\psi) \quad \forall \psi \in H^1_0(\Omega).
\end{align*}

\noindent Computing the Fr\'echet derivative of the mean squared value goal functional, we can rewrite the adjoint problem as

\begin{align*}
	( \nabla \psi, \nabla z ) = \left(\psi, \frac{2u}{|\Omega |}\right)
\end{align*}
and can be transformed into its strong form
\begin{align*}
    -\Delta z &= \frac{2u}{|\Omega |}.
\end{align*}

\noindent Our training setup also changed slightly. The problem statement has become more difficult and we decided to use slightly bigger networks to compute a sufficiently good solution of the adjoint solution. We used three hidden layers with 32 neurons and retrained the neural network on each grid, since the primal solution is part of the adjoint PDE.
\begin{figure}[H]
\begin{center}
\begin{tabular}{|c||r|c|c|c|r|c|c|c|}
    \hline
   & \multicolumn{4}{|c|}{FEM} & \multicolumn{4}{|c|}{NN}  \\ \hline
    Ref. & DoFs & $J(u) - J(u_h)$ & Est. error & $I_{eff}$ & DoFs & $J(u) - J(u_h)$ & Est. error  & $I_{eff}$  \\ \hline
     0 &9   & 7.26e-4  &  5.63e-4  & 0.776  &  9   &  7.26e-4  &  1.89e-4 &  0.261\\
     1 &25  & 1.87e-4  &  1.75e-4  & 0.936  &  25   &  1.87e-4  &  1.34e-4 &  0.713 \\
     2 &81  & 4.71e-5  &  4.64e-5  & 0.987  &  81   &  4.71e-5  &  3.39e-5 &  0.721\\
     3 &289 & 1.16e-5  &  1.18e-5  & 1.01  &  289  &  1.16e-5  &  8.53e-6 &  0.732\\
     4 &1041 & 2.89e-6  &  3.16e-6  & 1.09  &  745  &  4.57e-6  &  3.06e-6 &  0.669\\
     5 &3561 & 6.86e-7  &  9.38e-7  & 1.37  &  2865  &  9.95e-7  &  7.51e-7 &  0.755\\ \hline
\end{tabular}
\caption{Section \thesubsection: Error estimator results for mean squared value goal functional.}
\end{center}
\end{figure}

\noindent Our neural network approach consistently underestimates the error and produces slightly worse results than the FEM solution. Nevertheless, the effectivity index is still sufficiently close to 1 and the grid refinement looks reasonable. Moreover as in the other previous tests, the effecitivity indices $I_{eff}$ are stable without major oscillations.
\begin{figure}[H]%
    \centering
    \subfloat[FEM]{
    \includegraphics[width=0.45\columnwidth]{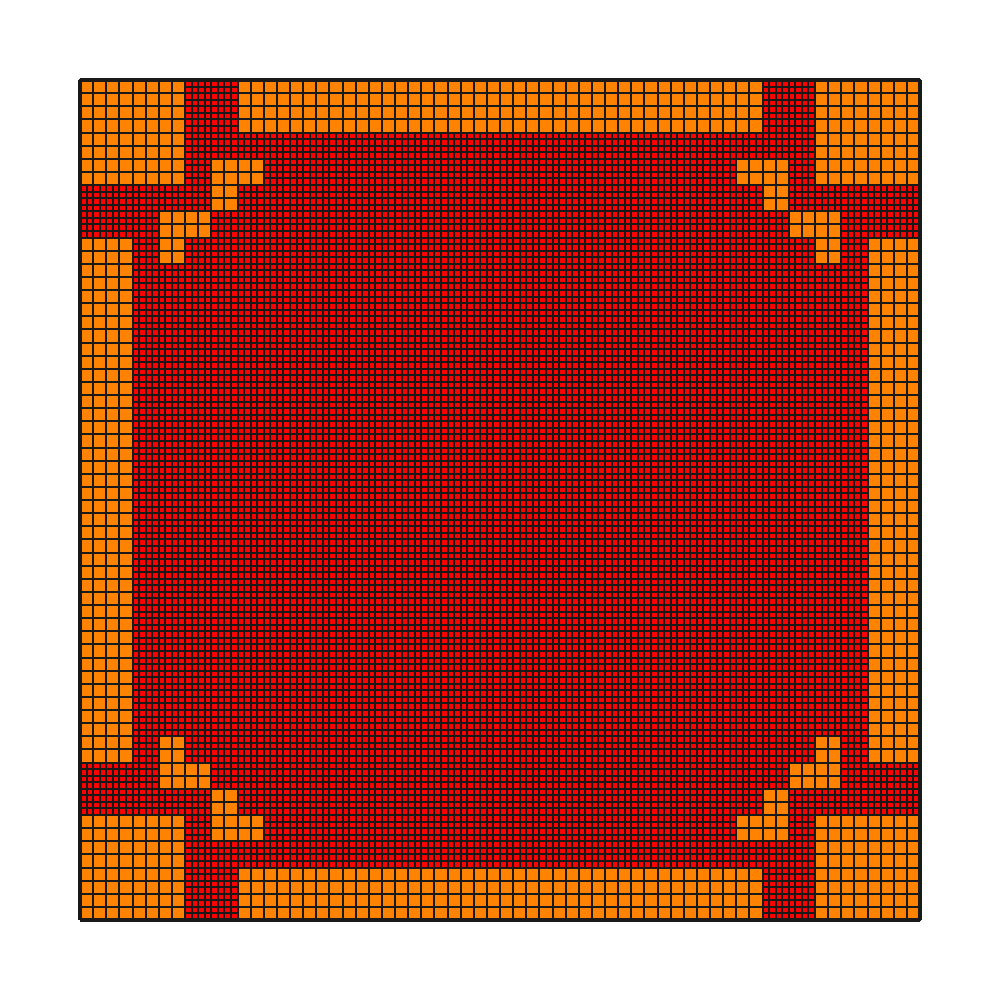}}%
    \quad
    \subfloat[NN]{
    \includegraphics[width=0.45\columnwidth]{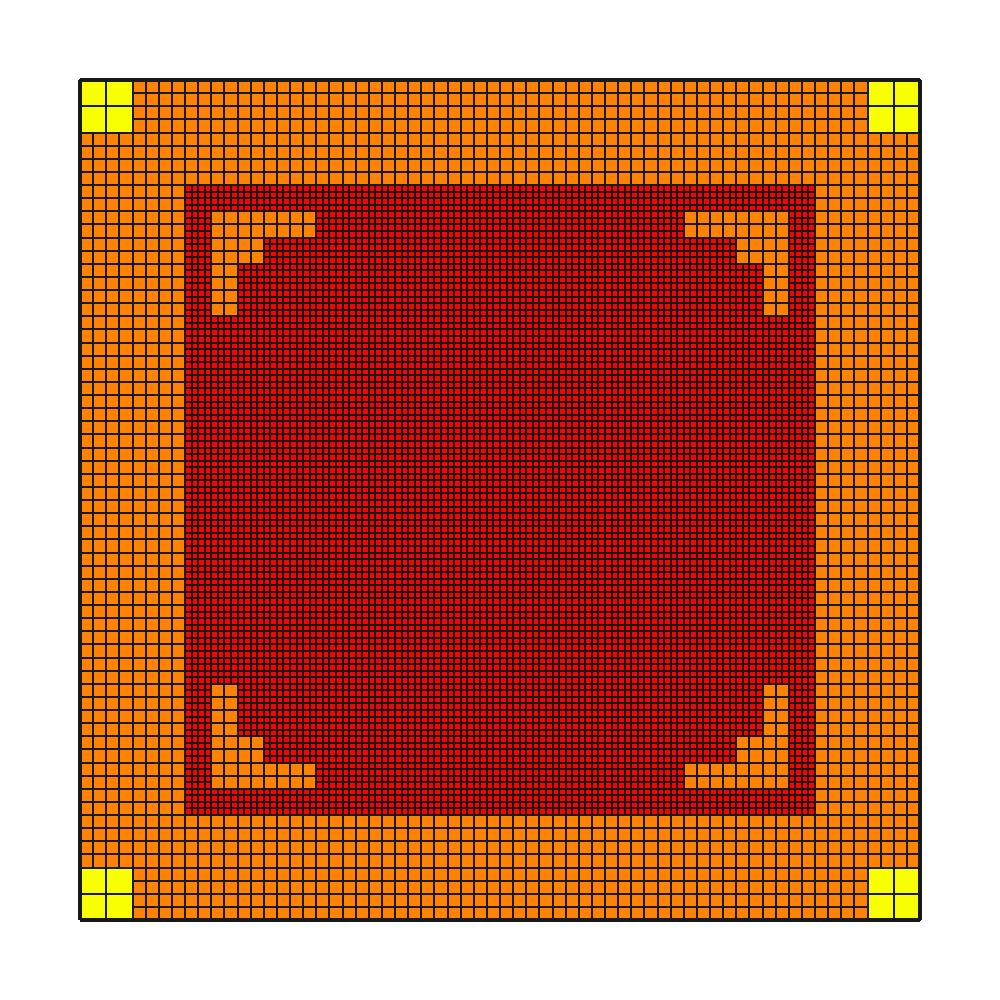}}%
    \caption{Section \thesubsection: Grid refinement with mean squared value goal functional.}%
\end{figure}

\subsection{Nonlinear PDE and nonlinear goal functional}
In the second numerical problem, we now consider the case were both the PDE and the goal functional are nonlinear. We add the scaled nonlinear term $u^2$ to the previous equation, such that the new problem is given by
\begin{align*}
    -\Delta u + \gamma u^2 &= f \quad \text{in}\ \Omega\\
    u &= 0 \quad \text{on}\ \partial\Omega,
\end{align*}
with $\gamma > 0$.
For our nonlinear goal functional, we choose the mean squared value goal functional from the previous example. The adjoint problem thus reads:

Find $z \in H^1_0(\Omega)$ such that 
\begin{align*}
	( \nabla \psi, \nabla z ) + 2\gamma (\psi, zu) = \left(\psi, \frac{2u}{|\Omega |}\right) \quad \forall \psi \in H^1_0(\Omega),
\end{align*}

\noindent with corresponding strong form
\begin{align*}
    -\Delta z + 2\gamma zu&= \frac{2u}{|\Omega |}.
\end{align*}
The training setup is the same as for the previous goal functional. For $\gamma = 50$ we obtain the following results:
\begin{figure}[H]
\begin{center}
\begin{tabular}{|c||r|c|c|c|r|c|c|c|}
    \hline
   & \multicolumn{4}{|c|}{FEM} & \multicolumn{4}{|c|}{NN}  \\ \hline
    Ref. & DoFs&  $J(u) - J(u_h)$ & Est. error & $I_{eff}$ & DoFs  & $J(u) - J(u_h)$ & Est. error & $I_{eff}$   \\ \hline
     0 &9   & 1.21e-3  &  8.64e-4  & 0.713  &  9   &  1.21e-3   &  0.821e-4 &  0.677 \\
     1 &25  & 3.58e-4  &  3.32e-4  & 0.926  &  25   &  3.58e-4   &  4.88e-4 &  1.36  \\
     2 &81  & 9.40e-5  &  9.29e-5  & 0.988  &  81   &  9.40e-5   &  4.24e-5 &  0.451 \\
     3 &289 & 2.33e-5  &  2.39e-5  & 1.03  &  241  &  3.01e-5   &  2.68e-5 &  0.890 \\
     4 &945 & 6.03e-6  &  7.17e-6  & 1.19  &  809  &  7.67e-6   &  4.97e-6 &  0.648 \\
     5 &3089 & 1.25e-6  &  2.15e-6  & 1.72  &  2947  &  1.52e-6   &  1.58e-6 &  1.04 \\ \hline
\end{tabular}
\caption{Section \thesubsection: Error estimator results for the nonlinear PDE.}
\end{center}
\end{figure}

\noindent Our neural network approach produces different results than the finite element method, but at the efficiency indices and the refined grids we observe that our approach still works well for adaptive mesh refinement.  

\begin{figure}[H]%
    \centering
    \subfloat[FEM]{
    \includegraphics[width=0.45\columnwidth]{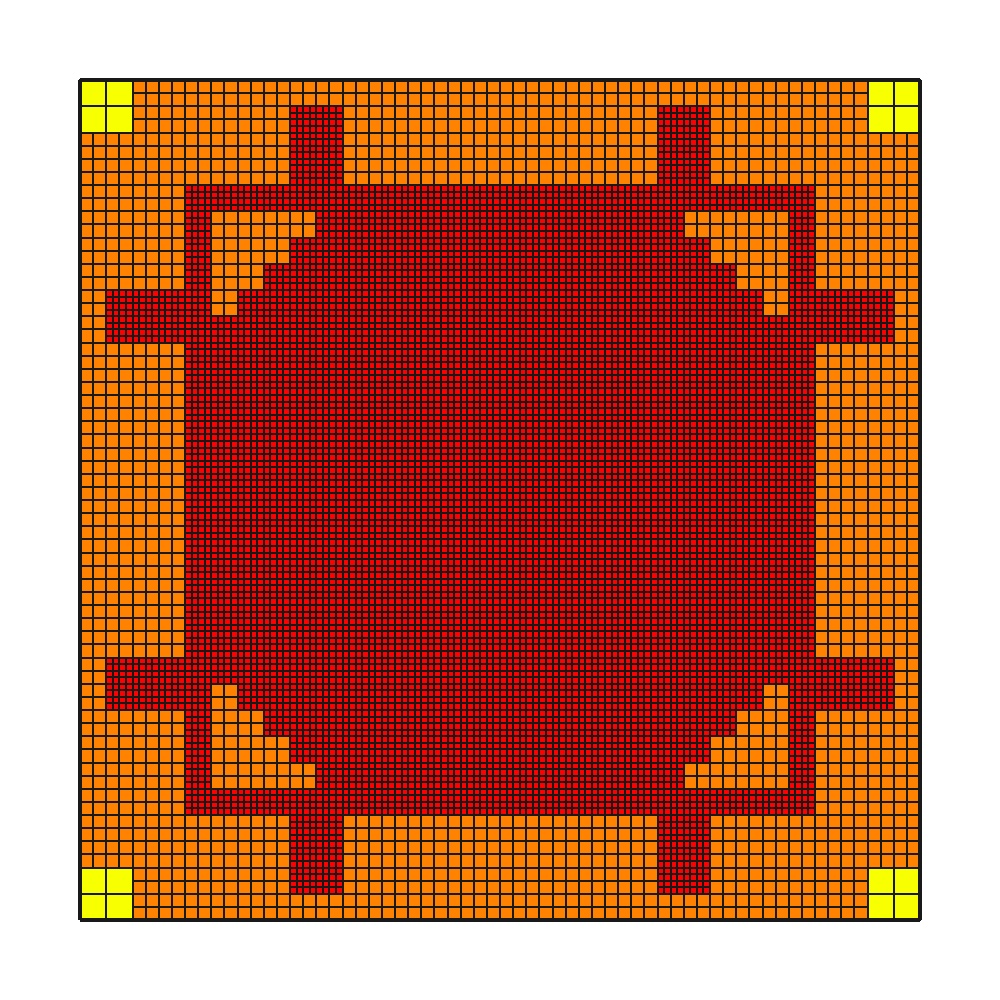}}%
    \quad
    \subfloat[NN]{
    \includegraphics[width=0.45\columnwidth]{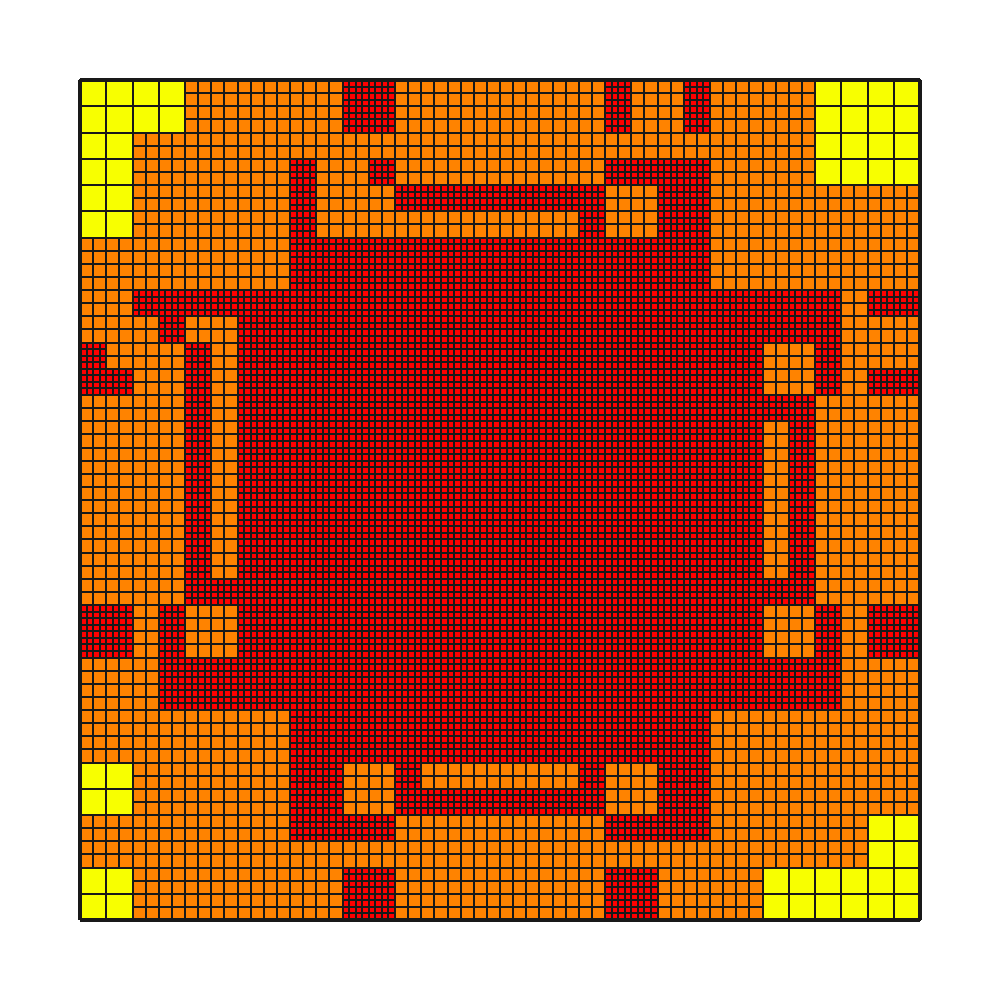}}%
    \caption{Section \thesubsection: Grid refinement for the nonlinear PDE.}%
\end{figure}

\section{Conclusions and outlook}
\label{sec:conclusions}
In this work, we proposed neural network guided a posteriori error estimation with the dual weighted residual method. Specifically, we computed the adjoint solution with feedforward neural networks with two or three hidden layers. To use existing FEM software we first solved the adjoint PDE with neural networks and then projected the solution into the FEM space of the adjoint PDE. We demonstrated experimentally that neural network based solutions of the strong formulation of the adjoint PDE yield excellent approximations for dual weighted residual error estimates. Therefore, neural networks might be an effective way to compute adjoint sensitivities within goal-oriented error estimators for certain problems, when the number of degrees of freedom is high. Furthermore they admit greater flexibility being a meshless method and it would be interesting to investigate in future works how different choices of collocation points influence the quality of the error estimates. A sophisticated choice of collocation points could lead to a significant speedup over the finite element method for a high number of degrees of freedom.
However, an important current limitation of our methodology is that we work with the strong formulation of the PDE, whose derivation from the weak formulation can be very involved for more complex problems, e.g. multiphysics. Hence, if an energy minimization formulation exists, this should be a viable alternative to our strong form of the adjoint PDE. This alternative problem can be solved with neural networks with the "Deep Ritz Method" \cite{E2018, SAMANIEGO2020112790}. Nevertheless, the energy minimization formulation does not exist for all partial differential equations. For this reason in the future, we are going to analyze neural network based methods, which work with the variational formulation, e.g. VPINNs \cite{kharazmi2019variational}.\\

\section*{Acknowledgements}
This work is supported by the Deutsche Forschungsgemeinschaft (DFG) 
under Germany's Excellence Strategy within 
the cluster of Excellence PhoenixD (EXC 2122, Project ID 390833453).

\bibliographystyle{abbrv}

\end{document}